\font\bbbld=msbm10 scaled\magstephalf
\newcommand{\bfR}{\hbox{\bbbld R}}
\newcommand{\bfS}{\hbox{\bbbld S}}
\newcommand{\e}{\varepsilon}
\newcommand{\goto}{\rightarrow}
\newcommand{\ol}{\overline}
\newcommand{\ul}{\underline}
\newcommand{\be}{\begin{equation}}
\newcommand{\ee}{\end{equation}}
\newcommand{\bea}{\begin{eqnarray}}
\newcommand{\eea}{\end{eqnarray}}
\newcommand{\diam}{\operatorname{diam}}
\newtheorem{theorem}{Theorem}[section]
\newtheorem{lemma}[theorem]{Lemma}
\newtheorem{proposition}[theorem]{Proposition}
\theoremstyle{definition}
\newtheorem{definition}[theorem]{Definition}
\newtheorem{example}[theorem]{Example}
\theoremstyle{remark}
\newtheorem{remark}[theorem]{Remark}
\theoremstyle{claim}
\newtheorem{claim}[theorem]{Claim}
\numberwithin{equation}{section}
\begin{document}
\setlength{\baselineskip}{1.2\baselineskip}

\title[A personal tribute to Louis Nirenberg ]
{ A personal tribute to Louis Nirenberg:\\
 February 28, 1925--January 26, 2020. 
} 

\author{Joel Spruck  }
\address{Department of Mathematics, Johns Hopkins University,
 Baltimore, MD 21218}
\email{js@math.jhu.edu}

\thanks{This paper is an expanded version of  a talk of the same title given at the Geometry Festival Stony Brook, April 23, 2021.}

\begin{abstract}
I  first met Louis Nirenberg  in person in 1972 when I became a Courant Instructor.
He was already a celebrated mathematician and a suave sophisticated New Yorker, even though he was born in Hamilton, Canada and grew up in Montreal. In this informal style paper I will describe some of his famous papers, some of our joint work and other work he inspired. I will concentrate on some of Louis' work inspired by geometric problems  beginning around 1974, especially the method of moving planes and implicit fully nonlinear elliptic equations. I  have also sprinkled throughout some comments on his character and personality that I believe contributed to his great success.
\end{abstract}

\maketitle

\vspace{-.4in}

\section{Introduction: California meets New York 1972. }

\label{sec1}


I  first met Louis Nirenberg in person \footnote{Louis was for a few years an editor of JDG and I sent him the paper  \cite{ball} in 1971.} in 1972  when I became a Courant Instructor after my Ph.d work at Stanford.
He was already a celebrated mathematician and a suave sophisticated New Yorker, even though he was born in Hamilton, Canada and grew up in Montreal. I was born in Brooklyn, New York and a post-doc at Courant was a return home for me. I had been greatly influenced by my time at Stanford in the late sixties by the hippie culture and the political anti-war activism. The picture below shows more or less how Louis and I comparatively  looked when I arrived at Courant institute. \\

\begin{center}
\includegraphics[width=3cm,  height=3cm]{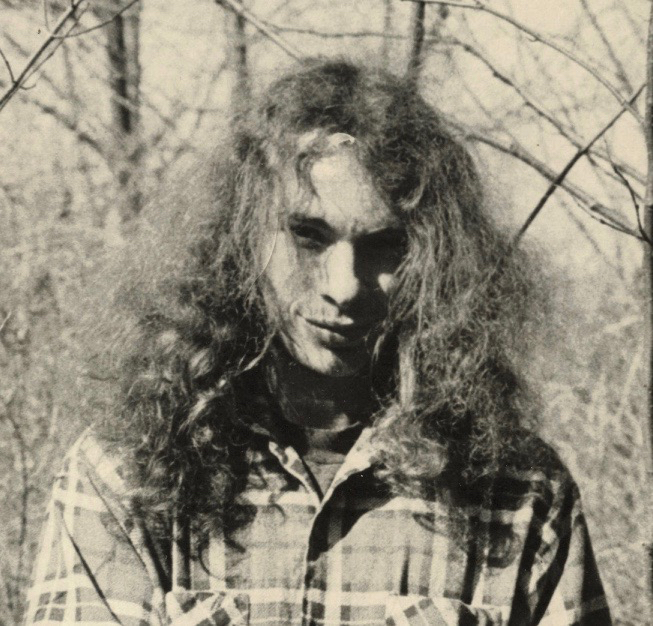}
\includegraphics[width=2.5cm,  height=3cm]{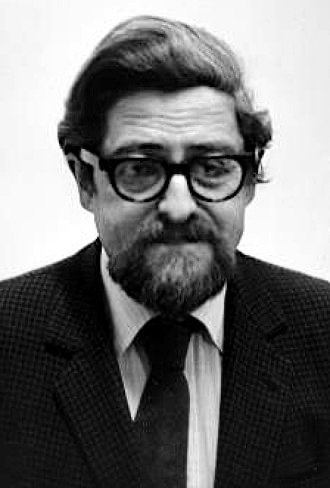}
\end{center}

 Louis accepted and welcomed me without hesitation as he did with almost everyone he met. One of his greatest and most endearing attributes which served him well throughout his entire life was his openness to young people and new ideas. He lived on the upper west side in a large New York style apartment and loved books and movies and of course food and travel. He was funny with a wry sense of humor which was often self-deprecating. If he really liked an idea, he would say, ``that's good enough to steal''. He once took me to Veniero's, a famous Italian pastry and coffee place in the East village and as we were stepping up to the counter to order he told me he wasn't going to treat me because he was cheap! In fact he was an incredibly generous person who gave freely of his time and who often invited the young faculty at Courant to dinner at his home. \footnote{At a Courant Christmas party I was surprised to see that he also loved to dance!}

 In this informal style paper, I will describe some of Louis' famous papers, some of our joint work and other work he inspired. \footnote{I  have also sprinkled throughout some comments on his character and personality that I believe contributed to his success.}
For reasons of exposition, I will not follow a strict
chronological order and 
I will concentrate on some of Louis' work inspired by geometric problems  beginning around 1974, especially the method of moving planes and implicit fully nonlinear elliptic equations.
During the twenty year period 1953-1973 \footnote{Louis received his Ph.d in 1949 and his papers started appearing in rapid fire in 1953.} he produced an incredible body of work in many fields of pde, geometric analysis and complex geometry starting with his famous thesis work \cite{N.thesis} solving the classical Weyl and Minkowski problem via a continuity method, the Newlander-Nirenberg theorem on the integrabilty of almost complex structures \cite{NewN} and its application to the deformation  of complex structures \cite{KoNS}, his work with Bers on the representation of solution of linear elliptic equation in the plane \cite{BersN1, BersN2} (see also \cite{N1953}), his work with Agmon and Douglis \cite{DoN, ADN1, ADN2} on linear elliptic boundary value problems, the John-Nirenberg inequality and BMO \cite{JoN}, his work with Kohn on non-coercive boundary value problems and pseudo- differential operators  \cite{KoN1, KoN2, KoN3} , his work with Tr\`{e}ves on local solvability \cite{TN, TN1, TN2}, his many papers on Sobolev type inequalities (see for example \cite{N1959}), his work with Caffarelli and R. Kohn on partial regularity for suitable weak solutions of the Navier-Stokes equation \cite{CKoN} and the list goes on.  Of course I cannot begin to talk about this work but 
fortunately there are a number of excellent appreciations of his work, for example \cite{1996.medal} ,  \cite{YYLi}, \cite{Don},  \cite{Riviere},    \cite{AMS},  \cite{Kohn.Abel},  \cite{Vasquez}.
Louis loved to collaborate and I apologize for omitting many other important results. \footnote{Louis' most frequent collaborators according to Mathscinet were Luis Caffarelli, Henri Berestycki, Yanyan Li, Haim Brezis, Joel Spruck, David Kinderlehrer, Joseph J. Kohn, Francois Tr\`{e}ves, Shmuel Agmon, Peter Lax,  Italo Capuzzo-Dolcetta, Avron Douglis, Lipman Bers, Ivar Ekeland, Basilis Gidas, Robert V. Kohn and Wei-Ming Ni,  Donald C. Spencer, S.S. Chern, Fritz John, Kunihiko Kodaira, Harold I. Levine, Charles Loewner, Guido Stampacchia, Srinivasa R. S. Varadhan, Homer F. Walker, Sidney M. Webster and Paul Yang.} 
 His brilliant student and frequent collaborator Yanyan Li was especially devoted to Louis in his later years.

\section{Louis' work on elliptic systems finds some novel applications.} 
Louis   with Agmon and Douglis  introduced an incredibly general notion of a linear elliptic system 
$$\sum_{j=1}^N L_{kj}(y,1/i  \ \partial)u^j(y)=f_k(y),\, 1\leq k\leq n$$
that is somewhat mysterious and hard to understand for the non-expert.  Here the unknowns are $u^1,\ldots u^n$, defined in a bounded domain $\Omega \subset \bfR^N$ or sometimes $\bfR_+^N$.  It involves assigning  integer weights  $s_k\leq 0$ to each equation and $t_k \geq 0$ to each unknown with 
$$\max_k s_k=0,\,\text{order}\,L_{kj}\leq s_k+t_j\,\,\text{(consistency)},$$
 in order to determine the principal part $L'_{kj}$ of the system, that is the part of $L_{kj}$ where the order  is precisely $s_k+t_j$. Ellipticity  then becomes a maximal rank condition on $L'_{kj}(y,\xi)$. One of the oddities of this notion of ellipticity (as was  pointed out in the paper of Douglis and Nirenberg \cite{DoN}) is that it can be destroyed by a non-singular transformation of the equations and the dependent variables and is thus  seemingly dependent on the particular way the equations and unknowns are presented. We will in fact exploit this in the Lewy example below!
 
I want to mention two novel applications of this work to what is  called `` free boundary regularity'' taken from a joint paper \cite{KNS1} of Louis with David Kinderlehrer and me.

\subsection{An example of Hans Lewy}
The first example is due to Hans Lewy. Louis was a great admirer of Hans Lewy and was well acquainted with his work.
\vspace{.1in}

\begin{center}
\includegraphics[width=3cm,  height=2.5cm]{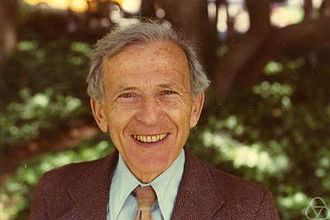}
\end{center}

Let $u,v \in C^2(\Omega\cup S)$ be solutions of the system
\begin{eqnarray}
\label{eq2.10}&\Delta u=0, \,\, \,\Delta v +\lambda(x) v=0 \,\,\text{in $\Omega$},\\
\label{eq2.20}&u=v,\,\, u_{x_n}=v_{x_n}\,\,\text{ on $S$}.
\end{eqnarray}
where $\lambda(x)\neq 0$ is analytic and $\partial \Omega$ is partly contained in the hyperplane $S=\{x_n=0\}$. Lewy proved that for $n=2$, $u$ and $v$ extend analytically across $S$. This is very surprising since the boundary conditions, which say that $u$ and $v$ share the same boundary conditions are ``not coercive''. To show that coercivity is present, we introduce the ``splitting'',
$w=u-v$ and rewrite the system \eqref{eq2.10}, \eqref{eq2.20} in terms of $u$ and $w$:
\begin{eqnarray}
\label{eq2.30}\Delta u=0,\\
\label{eq2.40}\Delta w +\lambda(x)(w+u)=0,\\
\label{eq2.50}w=w_{x_n}=0\,\,\text{on on $S$}.
\end{eqnarray}

Using the ADN general theory, we assign the weights $s=0$ to  equation \eqref{eq2.30} and s=\ -2 to equation \eqref{eq2.40} and the 
weights $t_u=2$ and $t_w=4$ to the unknowns. These weights are ``consistent'' and with this choice, the principal part of the system is 

\begin{eqnarray}
\label{eq2.60} \Delta u=0,\\
\label{eq2.70} \Delta w +\lambda(x)u=0,
\end{eqnarray}
which is certainly elliptic.

The crucial point to note is that now the boundary conditions \eqref{eq2.50} are coercive for the system.
Indeed we can eliminate $u$ entirely from \eqref{eq2.60}, \eqref{eq2.70} since $u=-(1/\lambda)\Delta w$ and obtain
$$\Delta^2 w+2\lambda \nabla(1/\lambda)\cdot \nabla \Delta w +\Delta(1/\lambda)\Delta w=0.$$
The boundary conditions $w=w_{x_n}=0$ are now just Dirichlet conditions for this fourth order equation which are well known to be coercive. The end result is that $u$ and $v$ are analytic in $\Omega\cup S$  (or $C^{\infty}$ if $\lambda \in C^{\infty}$). One point to emphasize is that {\em it is not necessary to eliminate $u$ } and often not possible in other related  problems.

\subsection{The regularity of the liquid edge}
Our second  example concerns the so-called ``liquid edge'' of a stable configuration of soap films.
Consider a configuration of three minimal surfaces in $\bfR^3$ meeting along a $C^{1+\alpha}$ curve, the liquid edge $\gamma$, at equal angles of $2\pi/3$.
Such a configuration represents one of the stable singularities that soap films can form (Jean Taylor \cite{Taylor}, J.C.C Nitsche \cite{Nitsche}).

\begin{center}
\includegraphics[width=4cm, height =4cm]{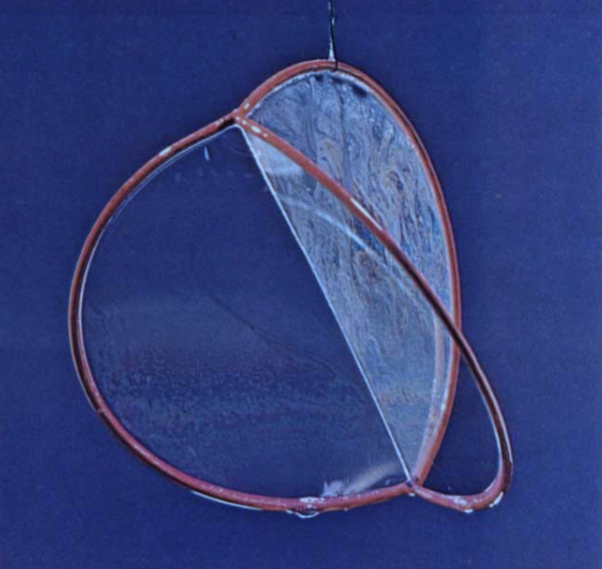}
\end{center}

We can reformulate this configuration as a classical free boundary problem.
With the origin of a system of coordinates on the curve $\gamma$, let us represent the three surfaces as graphs over the tangent plane to one of them at the origin.
Denote by $\Gamma$ the orthogonal projection of $\gamma$ on this plane. Two of the functions, say $u^1,\,u^2$ will be defined on one side $\Omega^+$ of $\Gamma$ while the third $u^3$ will be defined on the opposite side $\Omega^-$ of $\Gamma$.

We now extrapolate and suppose $\Omega^{\pm} \subset \bfR^n$ and $\Gamma$ is a $C^{1+\alpha}$ hypersurface.
Suppose that the graphs of $u^1$ and $u^2$ meet at angles $\mu_1, \,\mu_2$.
Then we have an overdetermined system for the $u^i,\,i=1,2,3$:

\begin{eqnarray}
Mu^1=Mu^2&=&0 \,\,\,\,\text{in $\Omega^+$},\\
Mu^3&=&0 \,\,\,\,\text{in $\Omega^-$},\\
u^1=u^2&=&u^3\,\,\,\text{on $\Gamma$},\\
\frac{\nabla u^j \cdot\nabla u^3+1}{\sqrt{1+|\nabla u^j|^2} \sqrt{1+|\nabla u^3|^2}}&=&\cos \mu_j \,\,\,\,j=1,2,\\
\nabla u^3(0)&=&0,
\end{eqnarray}
where
$$Mu=(\delta_{ij}-\frac{u_{x_i}u_{x_j}}{1+|\nabla u|^2})u_{x_i x_j} $$
is the minimal surface operator in nondivergence form.

\begin{theorem}If three distinct minimal hypersurfaces in $\bfR^{n+1}$ meet along an $(n-1)$ dimensional $C^{1+\alpha}$ hypersurface $\gamma$ at constant angles, then $\gamma$ is  analytic.
\end{theorem}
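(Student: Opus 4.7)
The strategy follows the template set by the Lewy example above and the general Kinderlehrer--Nirenberg--Spruck framework for free boundary problems: reformulate the overdetermined system so that ADN ellipticity and coerciveness of the boundary conditions become manifest, then bootstrap regularity and finally pass from $C^\infty$ to analytic.

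\emph{Step 1: Flatten the free boundary.} Because $\Gamma$ is $C^{1+\alpha}$ and tangent to a fixed hyperplane at the origin, we may locally write $\Gamma$ as a graph $x_{n-1}=\phi(x_1,\ldots,x_{n-2},x_n)$ with small gradient, and perform a partial hodograph/Legendre transformation that sends $\Gamma$ to the hyperplane $\{y_{n-1}=0\}$. In these new coordinates, the surfaces $u^1,u^2$ and $u^3$ become functions on fixed half-spaces, and $\phi$ enters as an additional unknown whose regularity is exactly the regularity of $\Gamma$. The transformed minimal surface equations remain quasi-linear elliptic, and the angle conditions and the matching condition $u^1=u^2=u^3$ on $\Gamma$ become nonlinear boundary conditions on the fixed hyperplane.

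\emph{Step 2: Split and assign ADN weights.} Mimicking the Lewy trick, introduce the differences $w^1=u^1-u^3$ and $w^2=u^2-u^3$, which vanish on $\{y_{n-1}=0\}$. The unknowns are now $(u^3,w^1,w^2,\phi)$. Assign integer weights $s_k\le 0$ to the equations and $t_j\ge 0$ to the unknowns so that each $w^j$ and the free-boundary function $\phi$ acquire enough weight that their Dirichlet-type conditions on $\{y_{n-1}=0\}$, together with the linearized angle conditions, form a complementing boundary system for the principal part. This is a direct analog of the weight choice $s=0,-2$ and $t_u=2,t_w=4$ in \eqref{eq2.60}--\eqref{eq2.70}; the effect is that the overdetermined appearance disappears once we recognize that $u^3$ can be recovered from $w^1,w^2$ via the transformed equations and that the remaining boundary data determines $\phi$.

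\emph{Step 3: Bootstrap and analyticity.} Once the principal part is ADN elliptic and the boundary conditions are verified to satisfy the Lopatinski--Shapiro complementing condition, one differentiates the system and applies the ADN Schauder estimates of \cite{ADN1,ADN2} repeatedly to obtain $C^{k,\alpha}$ regularity of $(u^3,w^1,w^2,\phi)$ for every $k$. Since the coefficients and nonlinearities are real analytic in their arguments, Morrey's analyticity theorem (in the form valid for nonlinear elliptic boundary value problems with coercive boundary conditions) upgrades $C^\infty$ to real analytic. In particular $\phi$ is analytic, so $\Gamma$ and hence $\gamma$ is analytic.

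\emph{Main obstacle.} The crux is the algebraic verification of the complementing condition for the linearized, weight-adjusted principal system. With three unknowns on one side, one on the other, and the extra unknown $\phi$ coupling them through the angle conditions, an unfortunate weight assignment produces a non-coercive problem; one must choose the weights so that, after freezing coefficients at a boundary point and Fourier transforming in the tangential variables, the bounded solutions of the resulting ODE system on the half-line are uniquely determined by the boundary data. Everything else is a combination of standard ADN regularity theory and standard analytic regularity.
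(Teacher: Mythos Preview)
Your outline captures the correct spirit --- reduce to an ADN-elliptic system with coercive boundary conditions, then invoke the analytic-regularity machinery --- but the way you propose to flatten and split differs from the paper's, and one step as written has a gap.

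The paper does \emph{not} flatten $\Gamma$ by writing it as a graph and carrying that graph function as an extra unknown. Instead it uses the \emph{zeroth order partial Legendre transform}: set $w=u^2-u^1$ on $\Omega^+$ (this vanishes exactly on $\Gamma$) and take $y=(x',w(x))$ as new coordinates. This sends $\Gamma$ to $\{y_n=0\}$ automatically, and since $u^2=y_n+u^1$ in the new variables, one unknown is eliminated for free. The inverse map $x=(y',\psi(y))$ supplies $\psi$ as the ``free-boundary'' unknown. To bring $u^3$ (defined on $\Omega^-$) onto the same half-space, the paper composes with the reflection $x=(y',\psi(y)-Cy_n)$ for $C>\sup|\nabla\psi|$. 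The resulting system has only the three unknowns $(\phi^+,\phi^-,\psi)$, and these satisfy a coercive ADN system on $\{y_n>0\}$; the authors stress that the simplicity of this reflection combined with the zeroth order transform is the real point.

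Your Step~2 forms $w^j=u^j-u^3$, but $u^1,u^2$ live on $\Omega^+$ while $u^3$ lives on $\Omega^-$, so these differences are undefined as written; you must first reflect $u^3$ across the free boundary, and that reflection is itself tied to the unknown $\Gamma$. This is precisely what the paper's construction handles in one stroke by building the reflection out of $\psi$. Once repaired, your four-unknown route $(u^3,w^1,w^2,\phi)$ could in principle be pushed through, but the paper's choice --- using $u^2-u^1$ itself as the new transverse coordinate --- is the decisive simplification and is worth absorbing on its own terms rather than as a generic ``partial hodograph''.
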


One has to introduce $w(x)=(u^2-u^1)(x)$ and the transformation $y=(x', w(x)),\,x\in \Omega^+$, the 
so-called zeroth order partial Legendre transform \cite{KN}.
The mapping $x\rightarrow y$ transforms a neighborhood of $0$ in $\Omega^+$ into a neighborhood $U\subset \{y:y_n>0\}$ and a portion of $\Gamma$ into a  $S\subset \{y:y_n=0\}$. It has an inverse
$$ x=(y', \psi(y)),\,\,y\in U\cup S.$$

We associate to this inverse a reflection mapping\footnote{The simplicity of the reflection mapping in combination with the zeroth order partial Legendre transform and the ADN theory is the real point of this example.}
$$x=(y', \psi(y)-Cy_n),\,\,y\in U\cup S,$$
for $C>\sup_U |\nabla \psi(y)|$. Now define
$$\phi^+(y)=u^1(x), \,x\in \Omega^+,\,\, \phi^-(y)=u^3(x),\,\,x\in \Omega^-.$$
and note that $u^2(x)=(w+\phi^+)(y)=y_n+\phi^+(y)$.
Then $\phi^+(y),\,\phi^-(y),\,\psi(y)$ satisfy  a coercive elliptic system in the ADN sense (see \cite{KNS1} for the technical details) which implies the analyticity.

\section{The influence of Alexandrov  and Serrin on Nirenberg's work.}
A central question in differential geometry has been to classify all possible ``soap bubbles'', or more formally put:
classify all closed constant  mean curvature hypersurfaces in $\bfR^{n+1}$ (or more generally in special Riemannian manifolds $M^{n+1}$). Amazingly, the first such result was proven by J. H. Jellett \footnote{John Hewitt Jellett 1817-1888 was an Irish mathematician who in 1849 \cite{Jellett1} studied how fixing a non-asymptotic curve in an analytic surface would render it infinitesimally rigid.} \cite{Jellett2} in 1853.

\vspace{.1in}

\begin{center}
\includegraphics[ width=3cm, height=4cm]{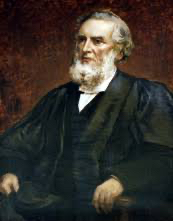}
\end{center}

A starshaped closed surface $M$ in $R^3$ with constant positive mean curvature is the standard round sphere.
A modern presentation of his proof, which extends to  $\bfR^{n+1}$ goes as follows. Let  $X:M^n \rightarrow \bfR^{n+1}$ be the position vector and $N$ the outward normal to $M$. We assume $X(M)$ has constant mean curvature $H>0$ and is starshaped about the origin, i.e. $X\cdot N \geq 0$. If $\Delta_M$ is the Laplace-Beltrami operator and $A$ the second fundamental form of $M$, then

\vspace{-.2in}

\begin{eqnarray}
\label{J1} \Delta_M X&=&-nH N,\\
\label{J2} \Delta_M N&=&-|A|^2 N.
\end{eqnarray}
and \eqref{J1}, \eqref{J2} imply
\begin{eqnarray}
\label{J3} \Delta_M |X|^2/2&=&n-nH N,\\
\label{J4} \Delta_M X\cdot N&=&nH-|A|^2 X\cdot N.
\end{eqnarray}

Then \eqref {J3}, \eqref{J4} yields
\be \label{J5} \Delta_M (H|X|^2/2-X\cdot N)=(|A|^2-nH^2)X\cdot N\geq 0. \ee
Integrating \eqref{J5} over M we conclude $|A|^2=nH^2$ or $M$ is totally umbilic, so a sphere.

\subsection{Heinz Hopf  lectures at Courant and Stanford.}


\begin{center}
\includegraphics [width=3cm, height =4cm]{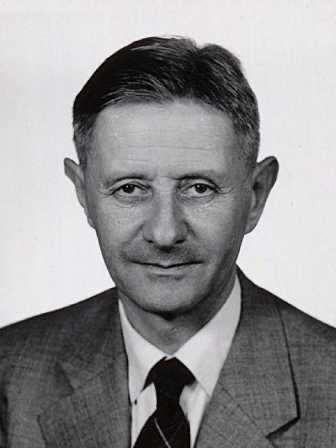}
\includegraphics[ width=3.5cm, height =3.5cm]{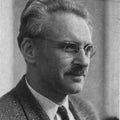}
\end{center}

In his visit to Courant and Stanford in 1955-1956, Hopf  lectures on various topics in differential geometry in the large and presents his ingenious proof that if $M$ is a closed immersed constant mean surface in $R^3$ with genus 0, then $M$ is a round sphere. He also sketched the proof of a new result  (not published at the time) of Alexandrov:

\begin{theorem}A closed embedded hypersurface $M^n$ in $R^{n+1}$ of constant mean 
curvature is a round sphere.\footnote{In 1982, Wu-Yi Hsiang \cite{Hsiang} produced infinitely many closed immersed cmc examples in $\bfR^n,\, n\geq 4$. In 1984 Henry Wente \cite{Wente} produced the first genus one example in $\bfR^3$.}
\end{theorem}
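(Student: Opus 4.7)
The proof I would give is Alexandrov's \emph{method of moving planes}, which later became one of Nirenberg's signature tools in his joint work with Gidas and Ni. The plan is to show that $M$ admits a hyperplane of symmetry in every direction; once this is established, elementary geometry forces $M$ to be a round sphere centered at the common intersection point of all the symmetry planes.

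Fix a unit vector $e$, and for $\lambda\in\bfR$ let $P_\lambda=\{x\cdot e=\lambda\}$. For $\lambda$ very large, $P_\lambda$ lies above $M$. Decrease $\lambda$ until $P_\lambda$ first touches $M$, and then continue. Let $M^+_\lambda$ denote the portion of $M$ with $x\cdot e>\lambda$, and let $\tilde M_\lambda$ be its reflection across $P_\lambda$. Because $M$ is compact and embedded, $\tilde M_\lambda$ initially lies strictly inside the bounded region enclosed by $M$. Let $\lambda^*$ be the smallest value of $\lambda$ down to which this interior containment persists. At $\lambda^*$, one of two obstructions must occur: either (i) $\tilde M_{\lambda^*}$ and $M$ are internally tangent at an interior point $p\notin P_{\lambda^*}$, or (ii) $M$ meets $P_{\lambda^*}$ orthogonally at some point $q\in P_{\lambda^*}$.

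In case (i), near $p$ write both $M$ and $\tilde M_{\lambda^*}$ as graphs $u,\tilde u$ over their common tangent plane; both functions satisfy the same quasilinear constant mean curvature equation
\be Mu = nH\sqrt{1+|\nabla u|^2}, \ee
where $M$ is the minimal surface operator introduced above. Their difference $w=u-\tilde u$ has a definite sign by the interior containment, vanishes to second order at $p$, and satisfies a linear uniformly elliptic equation obtained by a standard mean value computation on the coefficients; the strong maximum principle forces $w\equiv 0$ in a neighborhood of $p$, and a connectedness argument extends the identity to all of $M^+_{\lambda^*}$. In case (ii) the same setup applies, but the tangency is at a boundary point of the reflected domain where the domain has a corner and $w$ vanishes to very high order; the ordinary Hopf boundary point lemma is not enough, and one must invoke the \emph{Serrin corner lemma} to again conclude $w\equiv 0$. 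Either way $M$ is symmetric across $P_{\lambda^*}$, and since $e$ was arbitrary the theorem follows.

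The genuinely hard step is the corner analysis in case (ii): the two surfaces touch tangentially along $P_{\lambda^*}$ from inside a region with a corner, so one needs a refined boundary maximum principle that detects second-order vanishing at such a corner point. It is precisely this difficulty that motivated Serrin's refinement of the Hopf lemma and, as this section of the paper goes on to describe, ultimately the broad applicability of the moving planes method in Nirenberg's own work with Gidas, Ni, Berestycki, Caffarelli, and Li.
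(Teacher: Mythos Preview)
Your proposal is correct and follows exactly the approach the paper sketches: Alexandrov's moving-planes argument, using the maximum principle to produce a hyperplane of symmetry in every direction. The paper itself offers only a one-line indication of the proof (``Alexandrov's idea is to show using the maximum principle that an embedded hypersurface has a plane of symmetry for any direction $e_1$ and thus is a sphere''), so you have supplied the details the paper omits.

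One small correction on case (ii): the ordinary Hopf boundary point lemma already does the job there, and invoking Serrin's corner lemma is both unnecessary and anachronistic (Alexandrov's proof predates Serrin's 1971 paper). Writing both $M$ and the reflected cap as graphs over $T_qM$, the difference $w$ lives on a half-ball in $T_qM$ whose flat face is $T_qM\cap P_{\lambda^*}$; the point $q$ sits on that smooth flat boundary, not at a corner. Since $\nabla u(q)=0$ forces $\nabla w(q)=0$, Hopf's lemma immediately gives $w\equiv 0$. Serrin's corner refinement is what is needed for his overdetermined problem $\Delta u=-1$ in $\Omega$, where the reflected cap $\Sigma^\lambda$ genuinely meets $\partial\Omega$ in a right-angle corner; the paper introduces Serrin's lemma in the \emph{next} subsection precisely for that setting, not for Alexandrov's theorem.
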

\vspace{.1in}

Hopf goes on to speculate, `` it is my opinion that this proof by Alexandrov, especially the geometric part, opens up important new aspects in differential geometry in the large".

Alexandrov's idea is to show using the maximum principle that an embedded hypersurface has a plane of symmetry for any direction $e_1$ and thus is a sphere. Note that any open set of directions suffices.

\subsection{An important paper of Jim Serrin.}
Jim Serrin (1926-2012) was a  brilliant innovator in elliptic theory, the calculus of variations, fluids and mechanics.\\

\begin{center}
\includegraphics[width=4cm, height =4cm]{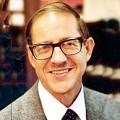}
\end{center}

In 1971, Serrin published a paper in which he showed that  if $u$ is a solution of the overdetermined (free boundary) problem
$$\Delta u = -1\,\,\text{in $\Omega,\, u = 0,\, u_{\nu}=c<0 $ on $\partial \Omega$},$$ (with $\Omega$ a $C^2$ domain)
then $\Omega$ is a ball.  A more physically appealing example in Jim's paper is that if a liquid in a capillary tube has both constant height and constant angle on the boundary, then the tube must have circular cross section.

The important part of Jim's paper was that he used Alexandrov's method of moving planes  and developed an important improvement of Hopf's boundary point lemma as well as a continuous (sweeping) use of the maximum principle. 
These techniques were simplified and improved by Gidas, Ni and Nirenberg \cite{GNN1} in their first paper but Serrin laid out the technical groundwork.\footnote{It is very surprising to me that Jim missed the greater application of his idea to proving symmetry of solutions of elliptic pde and I can only speculate that he was intensively working on other things and moreover such questions ``were not in the air'' at the time.}

\subsection{Symmetry for solutions of elliptic pde; motivating questions.}
Many problems in both Yang-Mills  theory, astrophysics and reaction-diffusion equations are modeled by equations of the form
$\Delta u +f(u)=0$. For example the power nonlinearity $f(u)=u^{\alpha}$ occurs frequently. The range $1\leq \alpha< \frac{n+2}{n-2} $ is called the subcritical range because of the Sobolev embedding theorem while the range $\alpha>\frac{n+2}{n-2}$ is called supercritical. The case $\alpha=\frac{n+2}{n-2}$ is particularly important because the equation becomes conformally invariant. This is the conformally flat case of the famous Yamabe problem.

Suppose we consider the Dirichlet problem
$$\Delta u +f(u)=0, \, u> 0 \,\,\text{in $B=B_1(0),\,u=0$ on $\partial B$}.$$
Is $u$ radially symmetric, i.e $u=u(|x|)$?
There is a simple counterexample if we allow $u$ to change sign. Take $f(u)=\lambda_k u$ where $\lambda_k$ is the kth Dirichlet eigenvalue of B. Then the eigenfunctions are not radially symmetric.

Let $\Delta u+f(u)=0,\, u\geq 0$ in $R^n$. Is $u$ radially symmetric? 
The famous equation $\Delta u+u^{\frac{n+2}{n-2}}=0,\, u>0$  in $R^n$ has the explicit solutions
$$u(x)=(\frac{\lambda\sqrt{n(n-2)}}{\lambda^2+|x-x_0|^2})^{\frac{n-2}2},\,\, \lambda>0.$$
On the other hand the subcritical equation $\Delta u+u^{\alpha}=0,\, u> 0$ in $R^n$ has only the trivial solution (Gidas-Spruck \cite{GS}).

\subsection{Singular solutions.}

There is also a complicated family of singular solutions of the form  $u(x)=r^{-\frac{n-2}2}\psi(t)$ where $r=|x|,\,t=-\log r$ and $\psi(t)$ is a periodic translation invariant solution of the ODE:
$$\psi''-(\frac{n-2}2)^2 \psi+\psi^{\frac{n+2}{n-2}}=0.$$
The simplest singular solution is
$$u(x)=\frac{k}{|x|^{\frac{n-2}2}},\, k=({\frac{n-2}2})^{{\frac{n-2}2}}.$$
Thus the classification of singular solutions is quite challenging.

\subsection{A general symmetry result of Nirenberg and Berestycki for  a bounded domain.}

Nirenberg refined his original method with Gidas and Ni in a paper with Berestycki  \cite{BN1} by incorporating the Alexandrov-Bakelman-Pucci (ABP) maximum principle into the argument. We sketch the simplest case of this improved proof which again uses another important idea of Alexandrov. These papers have had tremendous impact.

\begin{theorem} Let $\Omega$ be a bounded domain which is convex in the $e_1$ direction and symmetric with respect to the plane $x_1=0$. Suppose $u\in C^2(\Omega)\cap C^0(\ol{\Omega})$
 satisfies \footnote{It suffices that $u\in W^{2,n}_{\text{loc}}(\Omega)$} 
$$\Delta u+f(u)=0,\, u>0 \,\,\text{in $\Omega,\,u=0$ on $\partial \Omega$},$$
where $f(t)$ is Lipschitz. Then $u(x_1,x')=u(-x_1, x')$ and $u_{x_1}(x)<0$ for any $x$ with $x_1>0$.
\end{theorem}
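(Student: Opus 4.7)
The plan is to run Alexandrov's moving planes in the $e_1$ direction, controlling the relevant linear differential inequality by means of the Alexandrov--Bakelman--Pucci (ABP) maximum principle on narrow domains. Set $\lambda_0 = \sup\{x_1 : x \in \Omega\}$ and, for $0 < \lambda < \lambda_0$, introduce the cap
\[
\Sigma_\lambda = \{x \in \Omega : x_1 > \lambda\},\qquad x^\lambda = (2\lambda - x_1, x'),
\]
together with the reflected function $v_\lambda(x) = u(x^\lambda)$ and the difference $w_\lambda = v_\lambda - u$. Because $\Omega$ is convex in $e_1$ and symmetric about $\{x_1=0\}$, for every $\lambda \in (0,\lambda_0)$ the reflected cap lies inside $\Omega$, and $w_\lambda \geq 0$ on $\partial \Sigma_\lambda$ (on the flat part $T_\lambda \cap \Omega$ one has $w_\lambda=0$; on the curved part $\partial\Omega \cap \{x_1>\lambda\}$ one has $u=0$ and $v_\lambda \geq 0$). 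By the Lipschitz assumption on $f$,
\[
\Delta w_\lambda + c_\lambda(x)\, w_\lambda = 0 \quad\text{in } \Sigma_\lambda,
\]
where $c_\lambda(x) = \bigl(f(v_\lambda)-f(u)\bigr)/(v_\lambda - u)$ is a bounded measurable function with $\|c_\lambda\|_\infty$ uniformly controlled by the Lipschitz constant $L$ of $f$.

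The first step is to start the procedure for $\lambda$ close to $\lambda_0$. Here $\Sigma_\lambda$ has small Lebesgue measure, and the ABP inequality gives the \emph{maximum principle in narrow domains}: there exists $\delta > 0$, depending only on $n$ and $L$, such that whenever $|\Sigma_\lambda|<\delta$, any $W^{2,n}_{\mathrm{loc}}$ function satisfying $\Delta \varphi + c\varphi \geq 0$ in $\Sigma_\lambda$ with $\varphi \leq 0$ on $\partial \Sigma_\lambda$ must satisfy $\varphi \leq 0$ in $\Sigma_\lambda$. Applying this to $-w_\lambda$ yields $w_\lambda \geq 0$ on $\Sigma_\lambda$ for every $\lambda$ sufficiently close to $\lambda_0$.

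Next I would decrease $\lambda$ as far as possible. Define
\[
\mu = \inf\bigl\{\lambda \in (0,\lambda_0) : w_{\lambda'} \geq 0 \text{ in } \Sigma_{\lambda'} \text{ for all } \lambda' \in [\lambda,\lambda_0)\bigr\}
\]
and argue by contradiction that $\mu = 0$. If $\mu > 0$, then by continuity $w_\mu \geq 0$ in $\Sigma_\mu$, and since $w_\mu \not\equiv 0$ (because $u>0$ in $\Omega$ while $v_\mu$ vanishes on the reflected image of $\partial\Omega\cap\partial\Sigma_\mu$), the strong maximum principle applied to the equation $\Delta w_\mu + c_\mu w_\mu = 0$ gives $w_\mu > 0$ in $\Sigma_\mu$, and Hopf's boundary lemma gives $\partial_{x_1} w_\mu < 0$ on $T_\mu \cap \Omega$. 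The final step — and the most delicate one — is to pass from strict positivity at $\mu$ to positivity at slightly smaller $\lambda$. Decompose $\Sigma_{\mu-\e} = K \cup (\Sigma_{\mu-\e}\setminus K)$ where $K\Subset \Sigma_\mu$ is a compact set on which $w_\mu \geq \eta > 0$; by uniform continuity $w_{\mu-\e}\geq \eta/2 > 0$ on $K$ for small $\e$, while $\Sigma_{\mu-\e}\setminus K$ has measure less than $\delta$ and the narrow-domain ABP again forces $w_{\mu-\e}\geq 0$ there. This contradicts the definition of $\mu$, so $\mu = 0$.

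Thus $w_0 \geq 0$ on $\Sigma_0$; applying the same argument with $e_1$ replaced by $-e_1$ (using the symmetry hypothesis) reverses the inequality and yields $w_0 \equiv 0$, which is precisely the reflective symmetry $u(x_1,x') = u(-x_1,x')$. Finally, for any $\lambda \in (0,\lambda_0)$ the strong maximum principle combined with Hopf on $T_\lambda$ gives $w_\lambda > 0$ in $\Sigma_\lambda$ and $\partial_{x_1}w_\lambda < 0$ on $T_\lambda \cap \Omega$; translating this back to $u$ yields $u_{x_1}(x) < 0$ whenever $x_1 > 0$. The main obstacle throughout is the narrow-domain ABP estimate together with the compact-set argument used to continue past $\mu$; everything else is bookkeeping once that mechanism is in place.
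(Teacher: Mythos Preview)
Your argument is correct and is essentially the Berestycki--Nirenberg proof sketched in the paper: start the moving planes via the narrow-domain ABP maximum principle, continue past the critical level $\mu$ by the compact-set-plus-ABP trick, and finish with the strong maximum principle and Hopf's lemma. The only differences are a sign convention (you take $w_\lambda = u^\lambda - u$, the paper takes $w = u - u^\lambda$) and one harmless slip: on $T_\lambda$ one has $\partial_{x_1} w_\lambda = -2u_{x_1} > 0$, not $<0$, though your conclusion $u_{x_1}<0$ is unaffected.
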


\subsection{Alexandrov's generalized gradient map and Monge-Ampere measure}

Alexandrov introduced the normal mapping (subdifferential) of  $u\in C^0(\Omega)$
$$\partial u(x_0)=\{p\in R^n: u(x)\geq u(x_0)+p\cdot (x-x_0)\},\,
\partial u(E)=\cup_{x\in E} \partial u(x)\,\,\text{for $E\subset \Omega$}.$$
Of course, $\partial u(x_0)$ may be empty.
The lower contact set $\Gamma_u:=\{x\in \Omega: \partial u(x)\neq \emptyset \}$.\\
If $u\in C^1$ and $x\in \Gamma_u$, then $\partial u(x)=\nabla u(x)$.
If $u\in C^2$ and $x\in \Gamma_u$, then $\nabla^2 u(x) \geq 0$.

\begin{example} $\Omega=B_r(0),\, u(x)=\frac{h}R(|x|-R)$. Then
$$\partial u(x)=\begin{cases}\frac{h}R \frac{x}{|x|} & x\neq 0\\
\vspace{-.1in}{}&\\
 \ol{B_{\frac{h}R(0)}} & x=0 \end{cases}$$
 \end{example}

\begin{theorem} If $u\in C^0(\Omega)$, then $\mathcal{S}=\{E\subset \Omega: \partial u(E) \text {is Lebesgue measurable}\}$ is a Borel $\sigma$ algebra. The set function $\mathcal{M}u(E)=|\partial u(E)||$ is called the Monge-Ampere measure associated to $u$ and if $u\in C^2(\Omega)$ is convex,
$$\mathcal{M}u(E)=\int_E \text{det}(D^2 u) .$$
\end{theorem}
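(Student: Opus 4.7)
My plan is to treat the three assertions in turn: that $\mathcal{S}$ is a $\sigma$-algebra containing the Borel sets, that $\mathcal{M}u$ is a genuine measure, and that it coincides with $\det(D^2u)\,dx$ when $u\in C^2(\Omega)$ is convex. The structural first ingredient is the claim that $\partial u(K)$ is closed in $\bfR^n$ for every compact $K\subset\Omega$. Indeed, if $p_j\in\partial u(K)$ with $p_j\to p$ and $x_j\in K$ realizes the supporting inequality $u(x)\ge u(x_j)+p_j\cdot(x-x_j)$, a subsequence $x_j\to x_\ast\in K$ exists by compactness, and continuity of $u$ lets me pass to the limit, producing $p\in\partial u(x_\ast)$. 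Combined with the pointwise identity $\partial u(\bigcup_n E_n)=\bigcup_n\partial u(E_n)$ and the $\sigma$-compactness of $\Omega$, this already places every $F_\sigma$ subset into $\mathcal{S}$ and shows $\mathcal{S}$ is closed under countable unions.

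The main obstacle --- and what both upgrades $\mathcal{S}$ to a full $\sigma$-algebra and yields countable additivity of $\mathcal{M}u$ --- is the overlap lemma
\[
\Bigl|\bigl\{p\in\bfR^n:p\in\partial u(x)\cap\partial u(y)\text{ for some }x\ne y\text{ in }\Omega\bigr\}\Bigr|=0.
\]
My approach is a dual/Legendre viewpoint. Exhaust $\Omega$ by compacts $\Omega_k$ and define $v_k(p)=\inf_{x\in\Omega_k}(u(x)-p\cdot x)$; this is a finite concave function of $p$, hence locally Lipschitz, hence differentiable almost everywhere by Rademacher's theorem. At any $p$ where $v_k$ is differentiable, the minimizer in its definition is necessarily unique, since two distinct minimizers would contribute two distinct elements to $\partial v_k(p)$. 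Thus the ``double-contact'' slope set has measure zero, first on each $\Omega_k$ and then on $\Omega$ by countable union. Countable additivity on disjoint $E_1,E_2\in\mathcal{S}$ follows from $|\partial u(E_1)\cap\partial u(E_2)|=0$; closure of $\mathcal{S}$ under complements follows by writing
\[
\partial u(\Omega\setminus E)=\bigl(\partial u(\Omega)\setminus\partial u(E)\bigr)\cup\bigl(\partial u(E)\cap\partial u(\Omega\setminus E)\bigr),
\]
which is the union of a measurable set and a null set.

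With $\mathcal{S}$ and $\mathcal{M}u$ in hand, the $C^2$ convex case is straightforward. Convexity and smoothness force $\partial u(x_0)=\{\nabla u(x_0)\}$ at every $x_0$ (the supporting inequality gives $\nabla u(x_0)\in\partial u(x_0)$; any other $p\in\partial u(x_0)$ is eliminated by letting $x\to x_0$ along a ray after subtracting the tangent affine function), so $\partial u(E)=\nabla u(E)$, and the problem reduces to computing $|\nabla u(E)|$. Applying the area formula to the $C^1$ map $\nabla u$, whose Jacobian is $\det(D^2u)\ge 0$, gives
\[
\int_E\det(D^2u)\,dx=\int_{\bfR^n}\#\bigl\{x\in E:\nabla u(x)=p\bigr\}\,dp;
\]
the overlap lemma says the integrand equals $1$ for almost every $p\in\nabla u(E)$, yielding $|\nabla u(E)|$, while on the critical set $\{\det D^2u=0\}$ both sides vanish by Sard's theorem. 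The entire argument funnels through the overlap lemma, whose Legendre/Rademacher proof is the technical heart of the statement.
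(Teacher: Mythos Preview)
The paper does not actually prove this theorem; it is stated as background and the reader is immediately referred to the books of Figalli and Guti\'errez for the details. Your proposal is correct and is essentially the standard argument found in those references. The key technical point is indeed the overlap lemma (that the set of slopes supporting $u$ at two distinct points is Lebesgue-null), and your Legendre/Rademacher proof of it --- via $v_k(p)=\inf_{x\in\Omega_k}(u(x)-p\cdot x)$, concavity, and a.e.\ differentiability --- is precisely the classical route. The decomposition $\partial u(\Omega\setminus E)=(\partial u(\Omega)\setminus\partial u(E))\cup(\partial u(E)\cap\partial u(\Omega\setminus E))$ for closure under complements and the area-formula computation in the $C^2$ convex case are likewise the textbook arguments. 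One minor remark: in the $C^2$ case the separate appeal to Sard's theorem is unnecessary, since the area formula already accounts correctly for the critical set; but this does no harm.
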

See the books of Figalli \cite{Figalli} and Gutierrez \cite{Gutierrez} for a detailed discussion of the ideas of Alexandrov.

The following beautiful argument of Cabr\'{e} \cite{Cabre} \footnote{ Xavier Cabr\'{e} received his Ph.d with Louis in 1994.} utilizes Alexandrov's contact set to prove the classical isoperimetric inequality. Given $\Omega$, solve the Neumann problem
\begin{eqnarray*}
&\Delta u=1 ~ \mbox{in} ~\Omega \\
&u_{\nu}=c=\frac{|\Omega|}{|\partial \Omega|}~\mbox{on} ~\partial \Omega
\end{eqnarray*}
\begin{claim} The normal map $\partial u(\Gamma_u)$ contains a ball of radius c.
\end{claim}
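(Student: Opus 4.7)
\medskip
\noindent\textbf{Proof plan.}
The plan is to show that every $p \in B_c(0)$ lies in $\partial u(\Gamma_u)$ by choosing, for each such $p$, a point of lower contact by way of a global minimization. Concretely, fix $p \in \bfR^n$ with $|p| < c$ and define
$$v(x) = u(x) - p\cdot x, \quad x \in \ol{\Omega}.$$
Since $\ol{\Omega}$ is compact and $v$ is continuous, $v$ attains its minimum at some point $x_0 \in \ol{\Omega}$. I would first argue that $x_0$ must lie in the interior. If instead $x_0 \in \partial \Omega$, then moving from $x_0$ along the inward direction $-\nu$ cannot decrease $v$, so $v_\nu(x_0) \leq 0$; using the Neumann condition this gives $c = u_\nu(x_0) \leq p \cdot \nu(x_0) \leq |p| < c$, a contradiction.

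Hence $x_0$ is an interior minimum of $v$, so $\nabla u(x_0) = p$ and, more importantly, the inequality
$$u(x) \geq u(x_0) + p\cdot(x-x_0) \quad \text{for all } x \in \Omega$$
holds globally on $\Omega$ (not merely locally), since $x_0$ is a global minimum of $v$. By the very definition of Alexandrov's normal map recalled above, this says precisely that $p \in \partial u(x_0)$ and $x_0 \in \Gamma_u$. Therefore $B_c(0) \subset \partial u(\Gamma_u)$, which, after taking closures (or noting that $\partial u(\Gamma_u)$ has the same Lebesgue measure as $\ol{B_c(0)}$), yields the claim.

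The main obstacle, and the only genuinely geometric input, is the boundary step: one needs the Neumann datum to be large enough that no affine support function with slope $p$ of norm less than $c$ can first touch $u$ at a boundary point. This is exactly where the special value $c = |\Omega|/|\partial \Omega|$ enters, via the compatibility condition $\int_\Omega \Delta u = \int_{\partial \Omega} u_\nu$ forcing $u_\nu \equiv c$ to be solvable. Everything else is a straightforward application of the minimization definition of the subdifferential, so there are essentially no calculations to grind through beyond the boundary comparison above.
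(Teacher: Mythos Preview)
Your argument is correct and is essentially the same as the paper's: minimizing $v=u-p\cdot x$ over $\ol\Omega$ is exactly the analytic version of the paper's ``translate the hyperplane $\operatorname{graph}(p\cdot x+k)$ upward from $-\infty$ until first contact.'' Your boundary contradiction $c=u_\nu(x_0)\le p\cdot\nu(x_0)\le|p|<c$ is precisely the content of the paper's one-line justification that a first contact on $\partial\Omega$ forces $|p|\ge c$.
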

For if a hyperplane $\text{graph}(p\cdot x+k)$ is translated upwards in $\bfR^{n+1}$ ``from $-\infty$'' has first contact  on $\partial \Omega$, then $|p| \geq |\nabla u| \geq c,$ proving the Claim.
Hence,
\[\omega_n c^n \leq \int_{\Gamma_u} \det{D^2 u} \leq \int_{\Gamma_u}(\frac
{\Delta u}n)^n 
\leq \frac1{n^n}|\Omega|~, \,\,\text{or}\]
$$ |\partial \Omega| \geq n\omega_n^{\frac1n}|\Omega|^{\frac{n-1}n}~.$$

\subsection{A simple  version of the ABP maximum principle.}
\begin{lemma}
Assume $\Delta u+c(x) u \geq f $ in $\Omega,\, c(x)\leq 0, \, u\leq 0$ on $\partial \Omega$.
Then $M:=\sup_{\Omega}u\leq Cd\|f^-\|_{L^n(\Omega)},\, d=\text{diam}(\Omega)$.
\end{lemma}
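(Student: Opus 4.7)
The plan is to mimic the Cabr\'{e} argument just presented for the isoperimetric inequality, but sliding hyperplanes from above, i.e.\ using the upper contact set of $u$ rather than the lower contact set. Put $M:=\sup_\Omega u$, assume $M>0$ (otherwise there is nothing to prove), and fix $x_0\in\Omega$ with $u(x_0)=M$. Define the upper contact set
$$\Gamma^+:=\{x\in\Omega : \exists\, p\in\bfR^n\text{ with }u(y)\leq u(x)+p\cdot(y-x)\ \forall y\in\Omega\}$$
and write $\partial^+u$ for the associated supergradient map (Alexandrov's normal map with the defining inequality reversed). The geometric heart of the proof is the assertion $B_{M/d}(0)\subset \partial^+u(\Gamma^+)$. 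To see this, fix $p$ with $|p|<M/d$ and set $\ell_p(x):=p\cdot(x-x_0)+M$. Then $\ell_p(x_0)=u(x_0)=M$ while on $\p\Omega$, $\ell_p\geq M-|p|d>0\geq u$. Translating $\ell_p$ upward far enough and then sliding it back down until its graph first touches that of $u$ produces an interior contact point $x_p\in\Gamma^+$ with $p\in\partial^+u(x_p)$.

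Next I would transport this inclusion into an integral estimate via Alexandrov's Monge--Amp\`{e}re machinery applied to $-u$. By Alexandrov's theorem, $u$ is twice differentiable almost everywhere on $\Gamma^+$ with $-D^2u\geq 0$ there; the area formula for the normal map combined with the arithmetic--geometric mean inequality on the (nonnegative) eigenvalues of $-D^2u$ then yields
$$\omega_n\,(M/d)^n=|B_{M/d}(0)|\leq |\partial^+u(\Gamma^+)|\leq \int_{\Gamma^+}\det(-D^2u)\,dx\leq \int_{\Gamma^+}\Bigl(\frac{-\Delta u}{n}\Bigr)^{\!n}dx.$$

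The final step is to use the differential inequality to replace $-\Delta u$ by $f^-$. Each contact point $x_p$ above satisfies $u(x_p)\geq \ell_p(x_p)\geq M-|p|d>0$, so $u>0$ on the portion of $\Gamma^+$ that carries $B_{M/d}(0)$, and there $cu\leq 0$. The hypothesis $\Delta u+cu\geq f$ then forces $-\Delta u\leq -f-cu\leq -f\leq f^-$ pointwise, so
$$\omega_n\,(M/d)^n\leq n^{-n}\int_\Omega (f^-)^n\,dx=n^{-n}\|f^-\|_{L^n(\Omega)}^n,$$
which rearranges to $M\leq (n\omega_n^{1/n})^{-1}\,d\,\|f^-\|_{L^n(\Omega)}$, the claimed bound with explicit constant $C=1/(n\omega_n^{1/n})$.

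The main technical obstacle is the area-formula step for merely $W^{2,n}_{\mathrm{loc}}$ solutions: justifying $|\partial^+u(\Gamma^+)|\leq \int_{\Gamma^+}\det(-D^2u)\,dx$ requires Alexandrov's a.e.\ twice differentiability on the contact set together with the weak area formula for the generalized gradient map, both of which are precisely the pieces of Alexandrov's theory recalled in the excerpt. Given that foundation, the remainder is an elementary chain of inequalities.
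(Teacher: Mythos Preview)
Your argument is essentially the paper's own proof. The paper sets $v=-u^{+}$ and works with the \emph{lower} contact set $\Gamma_v$, which is nothing but your upper contact set of $u$ restricted to $\{u>0\}$, and then runs the identical chain $\omega_n(M/d)^n\le\int_{\Gamma_v}\det D^2v\le\int_{\Gamma_v}(\Delta v/n)^n\le\|f^-\|_{L^n}^n$.

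There is, however, one genuine logical slip in the order of your inequalities. You establish $-\Delta u\le f^{-}$ only on the portion of $\Gamma^{+}$ where $u>0$ (the part carrying $B_{M/d}$), yet your displayed chain integrates $\bigl(\tfrac{-\Delta u}{n}\bigr)^{n}$ over \emph{all} of $\Gamma^{+}$. On $\Gamma^{+}\cap\{u\le 0\}$ one has $cu\ge 0$, and the bound $-\Delta u\le f^{-}$ can fail there, so the last step does not follow as written. The fix is immediate: since you have already shown that every $p$ with $|p|<M/d$ lies in $\partial^{+}u\bigl(\Gamma^{+}\cap\{u>0\}\bigr)$, simply replace $\Gamma^{+}$ by $\Gamma^{+}\cap\{u>0\}$ throughout the displayed chain before invoking the differential inequality. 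This is precisely what the paper's truncation $v=-u^{+}$ buys you automatically. (A minor sign slip as well: from $\Delta u+cu\ge f$ one obtains $-\Delta u\le cu-f$, not $-f-cu$; since $cu\le 0$ on the relevant set, the conclusion $-\Delta u\le -f\le f^{-}$ is unaffected.)
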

\begin{proof} Assume $M>0$ is achieved at an interior point $x_1\in \Omega$. Let $v=-u^+$. Then $v<0$ on $\Gamma_v$, the lower contact set  of $v$ and $-M=v(x_1),\, v=0$ on $\partial \Omega$. Hence
$$\Delta v(x)=-\Delta u(x)\leq -\|c^+\|_{L^{\infty}(\Omega)} f(x)+c(x)u(x)\leq f^-(x),\, x\in \Gamma_v.$$
\begin{claim} $B(0, M/d)\subset \nabla v(\Gamma_v)$.
\end{claim}
Let $|p|<M/d$ and translate the plane $\text{graph}(p\cdot x+k)$ ``up from $-\infty$'' until there is a first contact with the graph of $v$ at a point
$x_0 \in \ol{\Omega}$. Then $x_0 \not \in \partial \Omega$ for otherwise (since this would imply $v(x_0)=0$)
$$-M=v(x_1)\geq p\cdot (x_1-x_0)\geq -|p| d>-M,\,\,\text{a contradiction.}$$
Therefore $p=\nabla v(x_1)$ proving the Claim. 

Hence 
$$\omega_n (M/d)^n\leq \int_{\Gamma_v}\text{det}(D^2 v)\leq \int_{\Gamma_v}(\Delta v/n)^n \leq \|f^-\|_{L^n(\Omega)}^n,$$
and the lemma follows. \end{proof}

\subsection{The maximum principle for domains of small volume.} 
We now prove the simplest case \footnote{See \cite{GT} for a proof of the most general version} of the Alexandrov-Bakelman-Pucci maximum principle for $\Delta u+c(x)u\geq 0$ and drop the assumption $c(x)\leq  0$. To accomplish this we write $c=c^+-c^{-},\, \Delta u -c^- u \geq -c^- u^+$. Applying the Lemma gives
$$M\leq Cd\|c^+ u^+\|_{L^n(\Omega)}\leq Cd\|c^+\|_{L^{\infty}(\Omega)} |\Omega|^{\frac1n}M\leq M/2$$
for $|\Omega|\leq (2Cd \|c^+\|_{L^{\infty}(\Omega)} )^{-n}$ and thus $M=0$. We have proved with no assumption on the sign of $c(x)$:
\begin{theorem} \label{ABP} Assume $\Delta u +c(x)u \geq 0$ in $\Omega,\, u\leq 0 $ on  $\partial \Omega$. Then there is a positive constant $\delta$ depending on $n,\, \diam(\Omega), \,\|c^+\|_{L^{\infty}(\Omega)} $ so that if $|\Omega|\leq \delta $, then $u\leq 0$ in $\Omega$.
\end{theorem}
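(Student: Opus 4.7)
The plan is to reduce to the preceding lemma by splitting the potential $c$ into its positive and negative parts and treating the wrong-sign piece as an inhomogeneous term on the right-hand side. Concretely, write $c(x) = c^+(x) - c^-(x)$ with $c^{\pm} \ge 0$ and rearrange the differential inequality as
\begin{equation*}
\Delta u - c^-(x)\, u \;\ge\; -c^+(x)\, u.
\end{equation*}
Now the coefficient $-c^-$ has the correct (non-positive) sign, so the hypothesis of the lemma is satisfied for the operator $\Delta - c^-$. Since the conclusion $u \le 0$ is vacuous on $\{u \le 0\}$, I may replace $u$ by $u^+$ on the right: the inequality $\Delta u - c^- u \ge -c^+ u^+$ still holds, and the forcing term $f := -c^+ u^+$ belongs to $L^n(\Omega)$ with $f^- = c^+ u^+$.

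Next I apply the preceding lemma directly to obtain
\begin{equation*}
M := \sup_\Omega u \;\le\; C\,\diam(\Omega)\, \|c^+ u^+\|_{L^n(\Omega)}.
\end{equation*}
The second step is a crude H\"older-type estimate on the right-hand side: pulling out the sup-norm of $c^+$ and bounding $u^+ \le M$ pointwise gives
\begin{equation*}
\|c^+ u^+\|_{L^n(\Omega)} \;\le\; \|c^+\|_{L^\infty(\Omega)}\, M\, |\Omega|^{1/n}.
\end{equation*}
Combining the two displays yields $M \le C\,\diam(\Omega)\,\|c^+\|_{L^\infty(\Omega)}\, |\Omega|^{1/n}\, M$, and choosing
\begin{equation*}
\delta := \bigl(2C\,\diam(\Omega)\,\|c^+\|_{L^\infty(\Omega)}\bigr)^{-n}
\end{equation*}
forces $M \le M/2$, so $M \le 0$ whenever $|\Omega| \le \delta$. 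This is the required $\delta = \delta(n, \diam(\Omega), \|c^+\|_{L^\infty})$.

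There is no genuine analytic obstacle here: once one recognizes that the previous lemma already handles arbitrary right-hand sides in $L^n$, the whole game is the little bookkeeping trick of moving $c^+ u$ to the right and replacing $u$ by $u^+$. The only thing to be careful about is that the $f^-$ appearing in the lemma really equals $c^+ u^+$ (and not something involving $u$ itself, which would not have a useful a priori bound in terms of $M$). The rest is an absorption argument whose feasibility rests exactly on the smallness hypothesis $|\Omega| \le \delta$: the factor $|\Omega|^{1/n}$ is what lets us beat the diameter-dependent constant $C\,\diam(\Omega)\,\|c^+\|_{L^\infty}$ coming from the lemma.
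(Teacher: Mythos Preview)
Your argument is correct and is essentially identical to the paper's: split $c=c^+-c^-$, move $c^+u$ to the right as a forcing term, apply the preceding lemma to get $M\le Cd\,\|c^+u^+\|_{L^n(\Omega)}\le Cd\,\|c^+\|_{L^\infty}|\Omega|^{1/n}M$, and absorb. The only cosmetic difference is that you explicitly pass from $-c^+u$ to $-c^+u^+$ before invoking the lemma, whereas the paper (modulo a typo) leaves this implicit in the computation of $f^-$.
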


\subsection{The proof of symmetry by moving planes in the $x_1$ direction. }
We will prove that 
$$u(x_1, x')<u(x_1^*, x')\,\,\text{ for all $x_1>0$ and $-x_1<x_1^*<x_1$.}$$
This implies monotonicity in $x_1$ for $x_1>0$. Letting $x_1^*\goto -x_1$ implies
$$ u(x_1,y)\leq u(-x_1,y) \,\, \text{for any $x_1>0$.}$$
Replacing $e_1$ by $-e_1$ gives reflectional symmetry
$$u(x_1,y)=u(-x_1,y)\,\,\text{for any}\,\,(x_1,y)\in \Omega.$$
\noindent Step 1. Moving the plane $T^{\lambda}:=\{ x_1=\lambda\}$ from ``infinity''. \\
Let $a=\sup_{\Omega} x_1$ and for $\lambda<a$, set $\Sigma^{\lambda}:=\{x\in \Omega: x_1>\lambda\}$.\\
Given $x=(x_1,y)\in \Omega$, let $x^{\lambda}=(2\lambda-x_1,y)$ be its reflection in $T^{\lambda}$.\\
For $x\in \Sigma^{\lambda}$, set $u^{\lambda}(x)=u(x^{\lambda})$ and $w=w^{\lambda}(x)=u(x)-u^{\lambda}(x)$.\\
By the mean value theorem,
$$\Delta w=-\frac{f(u)-f(u^{\lambda})}{u-u^{\lambda}}w=-c(x,\lambda)w,$$
where $c(x,\lambda)$ is bounded. Note that $\Sigma^{\lambda}$ consists of a part of $T^{\lambda}$ where $w=0$ and part of $\partial \Omega$ where $w=-u^{\lambda}<0$. In summary, we have 
\begin{eqnarray}
\label{eq17} &\Delta w +c(x,\lambda)w=0\,\,\text{in $\Sigma^{\lambda}$},\\
\nonumber &w\leq 0\,\,\text{and $w\not \equiv 0$ on $\partial \Sigma^{\lambda}$},
\end{eqnarray}
where $c(x,\lambda)$ is bounded.\\
\noindent Step 2. Beginning and ending.
For $a-\delta<\lambda<a,\, \Sigma^{\lambda}$ is narrow and so has small volume for $\delta$ small. Thus we may apply the ABP maximum to \eqref{eq17} and conclude that $w<0$ inside $\Sigma^{\lambda}$. Note also that by the Hopf boundary point lemma,
$$w_{x_1}|_{x_1=\lambda}=2u_{x_1}|x_1=\lambda<0.$$
Set 
$$\lambda_0=\inf \{0<\lambda<a:\,\,w<0 \,\,\text{in}\,\,\Sigma^{\lambda}\}.$$
If $\lambda_0=0$ we are done. Assume for contradiction that $\lambda_0>0$.\\
By continuity, $w\leq 0$ in $\Sigma^{\lambda_0}$ and moreover $w\not \equiv 0$ on $\Sigma^{\lambda_0}$.
Therefore the strong maximum principle implies
\be \label{eq18} w<0 \,\,\text{in $\Sigma^{\lambda_0}$}.\ee
\noindent Step 3: Deriving a contradiction.
\begin{claim}  $w^{\lambda_0-\e}<0$ in $\Sigma^{\lambda_0-\e}$ for sufficiently small $0<\e<\e_0$.
\end{claim}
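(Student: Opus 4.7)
The plan is to extend the strict negativity \eqref{eq18} slightly past $\lambda_0$ by splitting $\Sigma^{\lambda_0-\e}$ into a ``thick'' interior piece (where continuity gives the sign) and a ``thin'' remainder (where the small-volume ABP principle, Theorem \ref{ABP}, gives the sign). Combining the two pieces with the strong maximum principle then yields strict negativity on all of $\Sigma^{\lambda_0-\e}$, contradicting the definition of $\lambda_0$.

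First, I would fix the compact interior piece. Let $\delta>0$ be the small-volume threshold from Theorem \ref{ABP} associated to the bounded coefficient $c(x,\lambda)$ (a uniform bound in $\lambda$ near $\lambda_0$ is available since $f$ is Lipschitz). Choose a compact set $K\subset \Sigma^{\lambda_0}$ with
\[
|\Sigma^{\lambda_0}\setminus K|<\delta/2 .
\]
By \eqref{eq18} and continuity of $w^{\lambda_0}$ on $\overline{K}$, there exists $\eta>0$ with $w^{\lambda_0}\le -\eta$ on $K$. Using the uniform continuity of $u$ on $\overline{\Omega}$, choose $\e_0>0$ so small that for all $0<\e<\e_0$ one has $K\subset \Sigma^{\lambda_0-\e}$ and $w^{\lambda_0-\e}\le -\eta/2<0$ on $K$; shrinking $\e_0$ further, we can also arrange
\[
|\Sigma^{\lambda_0-\e}\setminus K|<\delta .
\]

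Next, I would handle the thin piece $D_\e:=\Sigma^{\lambda_0-\e}\setminus K$. The equation \eqref{eq17} (with $\lambda$ replaced by $\lambda_0-\e$) holds throughout $\Sigma^{\lambda_0-\e}$ with a coefficient $c(x,\lambda_0-\e)$ that is bounded independently of $\e$. On $\partial D_\e$ the sign of $w^{\lambda_0-\e}$ is controlled: on the portion lying in $\partial\Omega$ we have $w^{\lambda_0-\e}=-u^{\lambda_0-\e}\le 0$; on the portion on $T^{\lambda_0-\e}$ we have $w^{\lambda_0-\e}=0$; and on $\partial K$ we already have $w^{\lambda_0-\e}\le -\eta/2<0$. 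Since $|D_\e|<\delta$, Theorem \ref{ABP} yields $w^{\lambda_0-\e}\le 0$ throughout $D_\e$.

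Combining with the strict estimate on $K$, we obtain $w^{\lambda_0-\e}\le 0$ on all of $\Sigma^{\lambda_0-\e}$, and clearly $w^{\lambda_0-\e}\not\equiv 0$. Applying the strong maximum principle to \eqref{eq17} then gives $w^{\lambda_0-\e}<0$ in $\Sigma^{\lambda_0-\e}$, proving the Claim and contradicting the definition of $\lambda_0$. The main obstacle is the compatibility between the two choices: ensuring that a single $\e_0$ can be chosen so that $K\subset\Sigma^{\lambda_0-\e}$ with $w^{\lambda_0-\e}$ still strictly negative there, while simultaneously keeping $|\Sigma^{\lambda_0-\e}\setminus K|<\delta$. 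This is where the uniform continuity of $u$ on $\overline{\Omega}$ and the absolute continuity of Lebesgue measure are used in tandem; the boundedness of $c(x,\lambda)$ uniformly in $\lambda$ near $\lambda_0$ is what makes the small-volume threshold $\delta$ in Theorem \ref{ABP} independent of $\e$.
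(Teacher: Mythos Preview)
Your proposal is correct and follows essentially the same route as the paper: choose a compact $K\subset\Sigma^{\lambda_0}$ with $|\Sigma^{\lambda_0}\setminus K|<\delta/2$, use \eqref{eq18} and continuity to get $w^{\lambda_0-\e}\le -\eta/2$ on $K$, then apply the small-volume ABP maximum principle (Theorem~\ref{ABP}) on the thin remainder $\Sigma^{\lambda_0-\e}\setminus K$. Your explicit invocation of the strong maximum principle at the end to upgrade $w^{\lambda_0-\e}\le 0$ to strict inequality is in fact a small improvement in rigor over the paper's presentation, which jumps directly to $w<0$ from ABP.
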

Choose a simply connected closed set $K$ in $\Sigma^{\lambda_0}$ with smooth boundary (which is nearly all of $\Sigma^{\lambda_0}$ in measure) such that
$$|\Sigma^{\lambda_0}\setminus K|<\delta/2,$$
with $\delta$ as in Theorem \ref{ABP}.
By \eqref{eq18} there exists $\eta>0$ so that
\be w^{\lambda_0}\leq -\eta \,\,\text{for all $x\in K$},\ee
and so by continuity
\be \label{eq19} w^{\lambda_0-\e}\leq -\eta/2 \,\,\text{for all $x\in K$},\ee
for $\e$ sufficiently small.
Now we are in the situation that
$$w^{\lambda_0-\e} \leq 0  \,\,\text{on $\partial(\Sigma_{\lambda_0}\setminus K)$}. $$
However for $\e$ sufficiently small, we can make $|\Sigma_{\lambda_0-\e}\setminus K|<\delta$.\\
Once again applying the ABP maximum principle Theorem \ref{ABP} gives
$$w<0 \,\,\text{in $\Sigma_{\lambda_0-\e}\setminus K$}.$$
Combined with \eqref{eq19} we have
$$ w_{\lambda_0-\e}<0 \,\,\text{in $\Sigma_{\lambda_0-\e}$},$$
proving the Claim and giving a contradiction at last!

\begin{remark}

Louis' many contributions (with his collaborators) on the basic method of moving planes \cite{GNN1}, \cite{GNN2} and its variant ``the sliding method''   \cite{BN1} have become a standard tool in elliptic pde. He continued finding new twists and applications of moving planes to interesting pde problems in many papers with Berystycki, Caffarelli and Yanyan Li \cite{BN.mon1, BN.cyl, BCN1990, BN1991, BN1992, BCN1993, BCN.mon2, BCN1997, CYN2}.

His work has inspired thousands of papers and also some important variants and extensions to fully nonlinear elliptic pde \cite{CLi},
including a measure theoretic version \cite{CGS} \cite{CLi2} used to proved asymptotic symmetry  without growth assumptions at singularites (including infinity), and the method of ``moving spheres''  \cite{Li-Zhu, Li-Zhang, Jin-Li-Xu} which is particularly important for conformally invariant elliptic pde. There are also many, many other important contributions.

\end{remark}

\section{The Monge-Amp\`{e}re boundary value problem.}

The  Monge-Amp\`{e}re equation $\text{det}\, u_{ij}= \psi( x, u, \nabla u ) > 0$ is the prototypical fully nonlinear elliiptic pde. To see when it is elliptic, we linearize at a point $x_0$ , i.e.
$$L \phi = \frac{d}{dt} \text{det}\, [(u_{ij})(x_0) + t \phi_{ij}] |_{t=0} = A^{ ij} \phi_{ij},$$
where $A^{ij}$ is the cofactor matrix of $u_{ij}(x_0)$. For ellipticity we need $ ( A^{ij }) > 0$, that is, u convex.

The boundary value problem for the Monge-Amp\`{e}re operator is classically formulated as follows.
Let $\Omega\subset R^n$ be a smooth strictly convex domain and let $\phi, \, \psi>0$ be smooth.  Find a strictly convex solution 
$u\in C^{\infty}(\ol{\Omega})$ of the boundary value problem

\begin{eqnarray}
\label{eq4.10} \text{det}(u_{ij})&=&\psi(x,u,\nabla u)\,\,\text{in $\Omega$},\\
\label{eq4.20} u&=&\phi \,\,\text{on $\partial \Omega$}.
\end{eqnarray}

\subsection{Some history.} 

In 1971 Pogorelov showed that for the special case $\phi\equiv 0,\, \psi=\psi(x)>0$, there is a unique weak solution $u$ in the sense of Alexandrov and moreover $u\in C^{\infty}(\Omega)$. 
His proof uses his famous interior second derivative estimate and Calabi's ingenious interior estimates for third derivatives (in the metric $ds^2=u_{ij}dx_i dx_j$).\\

\begin{center}
\includegraphics[width=2.5cm, height=3cm]{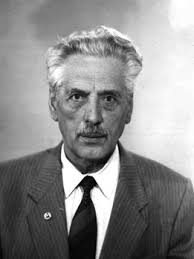}
 \hspace{.1in} 
 \includegraphics[width=2.4cm, height=3cm]{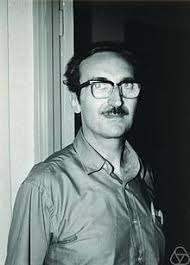}
\end{center}

 In 1974 Louis announced at the International Congress in Vancouver his joint work with Calabi providing a solution of \eqref{eq4.10}-\eqref{eq4.20} in complete generality. Unfortunately the argument contained a gap and fell through.

Louis, Luis Caffarelli \footnote{Louis recognized Luis' incredible brilliance and hired him away from U. Minnesota where he had spent the first ten years of his career. Luis spent only 1980--82 and 1994--1997 at Courant but became Louis' most frequent collaborator.} and I \cite{CNS1} \footnote{With Joe Kohn we also treated the complex Monge-Amp\`{e}re boundary value problem \cite{CKNS2}}  and independently Nikolai Krylov (see the expository article \cite{Krylov} and the references therein) proved the now classical existence theorem:

\begin{theorem} Suppose $\Omega$ is a strictly convex domain with $\Omega,\, \phi,\, \psi>0 $ smooth. Assume
in addition that for the boundary data $\phi$, there is a  strictly convex subsolution $\ul{u}$, i.e.
\vspace{-.1in}
\begin{eqnarray}
\det{(\ul{u}_{ij})}&\geq & \psi(x,\ul{u}, \nabla \ul{u})\,\,\text{in $\Omega$},\\
\nonumber  \ul{u}&=&\phi \,\, \,\text{on $\partial \Omega$}.
\end{eqnarray}

Then there exists a strictly convex solution $u\in C^{\infty}(\ol{\Omega})$ to \eqref{eq4.10}-\eqref{eq4.20}. If $\psi_u\geq 0$, the solution is unique.
\end{theorem}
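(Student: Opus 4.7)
The plan is to use the method of continuity. Introduce the one-parameter family
\[
\det(u^t_{ij}) = \psi_t(x, u^t, \nabla u^t) \text{ in } \Omega, \qquad u^t = \phi \text{ on } \partial\Omega, \qquad t \in [0,1],
\]
with the interpolation $\psi_t(x,r,p) = t\,\psi(x,r,p) + (1-t)\det(\ul u_{ij}(x))$, so that $\psi_t > 0$, $\ul u$ remains a subsolution for every $t$, and $\ul u$ itself solves the $t = 0$ problem. Let $S \subset [0,1]$ be the set of parameters for which a strictly convex $C^{2,\alpha}(\ol\Omega)$ solution exists. The goal is to show $S$ is non-empty, open, and closed, so $S = [0,1]$ and in particular the $t = 1$ problem is solvable; full $C^\infty(\ol\Omega)$ regularity then follows by Schauder bootstrap.

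Openness is a standard implicit function argument in an appropriate H\"older space. The linearization of $\log\det$ at a strictly convex $u^t$ is the uniformly elliptic operator $L\varphi = U^{ij}\varphi_{ij}$, with $U^{ij}$ the cofactor matrix of $u^t_{ij}$; incorporating the lower-order terms from $\psi_u$ and $\psi_{\nabla u}$, the maximum principle together with Schauder theory yields invertibility of the Dirichlet problem for $L$.

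The main obstacle is closedness: uniform a priori $C^{2,\alpha}(\ol\Omega)$ estimates for $u^t$, which proceed in stages. For $C^0$: the comparison principle for Monge--Amp\`ere gives $\ul u \leq u^t$, while convexity of $u^t$ yields $\Delta u^t \geq 0$ so $u^t \leq h$, with $h$ the harmonic extension of $\phi$. For $C^1$: convexity implies $\sup_\Omega|\nabla u^t| = \sup_{\partial\Omega}|\nabla u^t|$, and on $\partial\Omega$ the normal derivative is trapped between $\ul u_\nu$ and $h_\nu$. The interior $C^2$ bound follows from Pogorelov's auxiliary-function estimate, which reduces the interior bound to a boundary $C^2$ bound. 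The boundary $C^2$ estimate is the genuine heart of the proof and splits into three pieces: the tangential--tangential derivatives $u^t_{\tau\tau}$ on $\partial\Omega$ come from twice-differentiating $u^t = \phi$ along the boundary; the mixed tangential--normal derivatives $u^t_{\tau\nu}$ require a delicate barrier construction in which the subsolution difference $u^t - \ul u \geq 0$ (vanishing on $\partial\Omega$) plays the central role as a lower barrier; the double normal derivative $u^t_{\nu\nu}$ is then recovered algebraically from $\det(u^t_{ij}) = \psi_t$ once the full tangential block is controlled, using strict convexity of $\partial\Omega$ to bound the relevant minor from below. With the full $C^2$ bound in hand, Evans--Krylov applies, since $F(M) = \log\det M$ is concave on positive-definite $M$ and on the admissible range the equation is uniformly elliptic, delivering the desired uniform $C^{2,\alpha}$ bound and closing the continuity argument.

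Uniqueness under $\psi_u \geq 0$ follows from a standard comparison argument: at an interior maximum of $u - v$ for two solutions, $u_{ij} \leq v_{ij}$ and $\nabla u = \nabla v$, giving $\det u_{ij} \leq \det v_{ij}$, while $\psi_u \geq 0$ combined with $u > v$ gives $\det u_{ij} \geq \det v_{ij}$; the strong maximum principle then forces $u \leq v$, and symmetry yields equality.
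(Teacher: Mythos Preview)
Your outline follows the same continuity-method strategy the paper describes, and the architecture of the a priori estimates is right. But the paper (in the Remark immediately following the theorem) singles out precisely two places where you are too quick.

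First, for the double-normal bound: writing $A^{nn}u_{\nu\nu}=O(1)$ and solving for $u_{\nu\nu}$ requires a uniform \emph{positive lower bound} on the minor $A^{nn}$, i.e., on the determinant of the tangential Hessian. The paper emphasizes that this amounts to proving strict convexity of the solution at the boundary, and that (already in the simplest case $\phi\equiv 0$) it reduces to a quantitative estimate $u_\nu \geq c_0 > 0$. Strict convexity of $\partial\Omega$ by itself does not give this; one needs the normal-derivative lower bound as well, and this must be extracted from the barrier construction, not assumed.

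Second, and this is the point the paper flags most sharply: the Evans--Krylov theorem is an \emph{interior} $C^{2,\alpha}$ estimate. The paper says explicitly that interior regularity ``can be done \ldots\ using the Evans--Krylov theorem or Calabi's third derivative estimates. However the global estimates required a new idea.'' That new idea is Krylov's separate boundary H\"older estimate for second derivatives of solutions to concave fully nonlinear equations with $C^2$ boundary data. Your sentence ``Evans--Krylov applies \ldots\ delivering the desired uniform $C^{2,\alpha}$ bound'' elides exactly the step that was, historically and technically, the missing piece; without the boundary estimate the continuity argument does not close in $C^{2,\alpha}(\ol\Omega)$.
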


\begin{center}
\includegraphics[width=2cm, height=2.5cm]{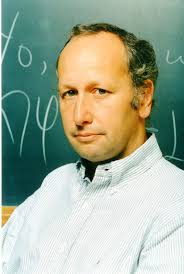}
 \hspace{.1in} 
 \includegraphics[width=2cm, height=2.5cm]{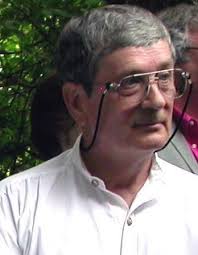}
\end{center}

\begin{remark}
There are two main difficulties in proving the existence of smooth strictly convex solutions via the continuity method in which one tries to prove apriori estimates for any admissible solution. The first one is to show that the second normal derivative
is apriori bounded  on the boundary (assuming all other second derivatives are apriori bounded), that is , $u_{nn}\leq C \,\text{ on $\partial \Omega$}$. The standard way to do this is from the equation $\det D^2u=\psi$. In a suitable frame one can write
$$A^{nn}u_{nn}=O(1)$$
where $A^{ij}$ is the cofactor matrix of $u_{ij}$ and then solve for $u_{nn}$. Thus one needs to prove strict convexity of the solution at the boundary.
 In the simplest case $\phi=0$, assuming strict convexity of the domain, this boils down to showing
$u_n\geq c_0>0$ with $c_0$ a controlled constant, for $e_n$ the outer unit normal.

The second major problem is that $C^2$ estimates do not suffice because the equation is fully nonlinear elliptic.
 One need to obtain global  $C^{2+\alpha}$ estimates. 
 This can be done strictly in the interior of $\Omega$ using the Evans-Krylov theorem  or Calabi's third derivative estimates. However the global estimates required a new idea.
 
\end{remark}

The following simple geometrical example suggests an improved result.
\begin{example} Let $\Gamma_1,\,\Gamma_0$ be strictly convex smooth closed codimension 2 hypersurfaces
in parallel planes, say $x_{n+1}=1,0$ respectively. Is there a hypersurface $S$ of constant Gauss curvature $K_0$  for $K_0$ sufficiently small? Intuitively the answer is clearly yes. 

Let's specialize further and suppose the parallel projection of $\Gamma_1$, call it $\gamma_1$ contains $\Gamma_0$. Then it is not difficult to see that if a solutions exists it is of the form  $S=\text{graph}(u)$  over the annulus $\Omega$ with outer boundary $\gamma_1$ and inner boundary $\Gamma_0$.
Thus $u$ satisfies \eqref{eq4.10} \eqref{eq4.20} with $\psi=K_0(1+|\nabla u|^2)^{\frac{n+2}2}$ and $\phi=1$ on $\gamma_1,\, \phi=0$ on $\Gamma_0$.
\end{example}

It was shown in \cite{HRS} that there is a unique smooth solution as expected. However since $\Omega$ is not convex, the classical existence theorem does not apply. This motivated Bo Guan and me \cite{GS, Guan1} to prove the subsolution existence theorem which holds in great generality.

\begin{theorem}Suppose $\Omega,\, \phi,\, \psi >0$ smooth and assume
that for the boundary data $\phi$, there is a  locally strictly convex subsolution $\ul{u}\in C^{\infty}(\ol{\Omega})$, i.e.
\vspace{-.1in}
\begin{eqnarray}
\det{(\ul{u}_{ij})}&\geq & \psi(x,\ul{u}, \nabla \ul{u})\,\,\text{in $\Omega$},\\
\nonumber  \ul{u}&=&\phi \,\, \,\text{on $\partial \Omega$}.
\end{eqnarray}

Then there exists a locally strictly convex solution $u\in C^{\infty}(\ol{\Omega})$ to \eqref{eq4.10}-\eqref{eq4.20}. If $\psi_u\geq 0$, the solution is unique.
Moreover any admissible solution satisfies the apriori estimate $\|u\|_{C^{2+\alpha}(\Omega)}\leq C$ for a controlled constant $C$.
\end{theorem}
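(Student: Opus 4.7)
The plan is to apply the continuity method along the family
\[
\det(D^2 u^t) = t\,\psi(x, u^t, \nabla u^t) + (1-t)\det(D^2 \ul{u}), \qquad u^t|_{\partial\Omega} = \phi, \quad t\in[0,1],
\]
so that $u^0 = \ul{u}$ supplies a locally strictly convex starting solution and $u^1$ is the desired solution. Openness of the admissible parameter set is a standard implicit function theorem argument, since at any locally strictly convex $u^t$ the linearization $A^{ij}\partial_i\partial_j$, with $A^{ij}$ the cofactor matrix of $D^2 u^t$, is uniformly elliptic. The entire weight of the proof then falls on the a priori estimate $\|u^t\|_{C^{2+\alpha}(\ol{\Omega})}\le C$ asserted in the last sentence of the theorem.

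For the $C^0$ and $C^1$ bounds I would use $\ul{u}$ directly as a barrier. Concavity of $M\mapsto(\det M)^{1/n}$ on positive matrices lets one write $u^t-\ul{u}$ as a subsolution of a linear elliptic equation; it vanishes on $\partial\Omega$, so the maximum principle gives $u^t\ge\ul{u}$ in $\Omega$ and hence the one-sided normal-derivative bound $u^t_{\nu}\le\ul{u}_{\nu}$ on $\partial\Omega$. A matching bound in the opposite direction comes from a local upper barrier of the form $\ul{u}+\varepsilon d - K d^2$, with $d=\dist(\cdot,\partial\Omega)$, which is a supersolution of the linearized equation in a thin collar precisely because $\ul{u}$ is locally strictly convex. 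Interior $C^1$ estimates then follow from maximum-principle arguments on the differentiated equation.

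The main obstacle, and the real novelty over the classical convex-domain theorem, is the boundary $C^2$ estimate. Interior $C^2$ bounds proceed by a Pogorelov-type argument, applying the maximum principle to a quantity such as $\log u^t_{\xi\xi} + \beta|\nabla u^t|^2 + \gamma(u^t-\ul{u})$; the auxiliary subsolution term is what tames the worst terms without any convexity of $\Omega$. On $\partial\Omega$, the intrinsic tangential Hessian is prescribed by $\phi$ and is controlled for free; the mixed tangent-normal components are bounded by barrier constructions built from $\pm(u^t-\ul{u})_\tau$ combined with a suitable function of $d$; and the double normal component $u^t_{\nu\nu}$ is extracted algebraically from the equation via $A^{\nu\nu} u^t_{\nu\nu} = \psi + (\text{already-controlled terms})$. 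Everything thus hinges on a uniform positive lower bound for $A^{\nu\nu}$, the determinant of the tangential block of $D^2 u^t$ along $\partial\Omega$, and this is precisely where local strict convexity of $\ul{u}$ is used: a careful barrier argument based on $\ul{u}$ forces the tangential Hessian of any admissible $u^t$ to dominate a definite positive matrix on $\partial\Omega$, with no convexity hypothesis on $\Omega$.

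Once the full $C^2$ estimate is in hand, the Evans--Krylov theorem upgrades it to $C^{2+\alpha}(\ol{\Omega})$, closing the continuity argument and producing the desired smooth locally strictly convex solution. Uniqueness under $\psi_u\ge 0$ follows by subtracting two admissible solutions, writing the difference as a solution of a linear elliptic equation with non-positive zeroth-order coefficient, and invoking the maximum principle. The hardest step by a wide margin is the positivity of $A^{\nu\nu}$ at the boundary without convexity of $\Omega$; everything else is a careful adaptation of the arguments in \cite{CNS1}.
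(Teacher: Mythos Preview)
The paper does not prove this theorem. It is stated as a result of Bo Guan and the author, with references \cite{GS, Guan1}, and the text moves immediately to Section~5; there is no argument in the paper to compare your proposal against.

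That said, your outline is broadly the strategy of the cited papers: continuity method, $\ul{u}$ as lower barrier, reduction of the gradient bound to the boundary (any interior critical point of $u_\xi$ would force a zero row in $D^2u$, contradicting $\det D^2u>0$), and identification of the double--normal boundary estimate, hence the uniform lower bound on $A^{\nu\nu}=\det(u_{\alpha\beta})_{\alpha,\beta<n}$, as the crux. Two places deserve sharpening. First, your upper barrier $\ul{u}+\varepsilon d - Kd^2$ is nonstandard and you have not said which linear operator it is a supersolution for; the routine route is to observe that any admissible $u$ is subharmonic (since $D^2u>0$) and compare with the harmonic extension of $\phi$, which immediately gives both the $C^0$ upper bound and the missing one--sided bound on $u_\nu$. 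Second, and more seriously, your sentence ``a careful barrier argument based on $\ul{u}$ forces the tangential Hessian of any admissible $u^t$ to dominate a definite positive matrix on $\partial\Omega$'' hides the whole difficulty. On $\partial\Omega$ one has $u_{\alpha\beta}-\ul{u}_{\alpha\beta}=(u_N-\ul{u}_N)\,II_{\alpha\beta}$, and without convexity of $\Omega$ the second fundamental form $II$ has no sign, so the $C^1$ bound alone does \emph{not} force $u_{\alpha\beta}\ge cI$. The actual argument in \cite{GS,Guan1} is more delicate: one works at a boundary point where the tangential determinant (suitably normalized) is minimal and builds a barrier there, using the subsolution in an essential way beyond the simple comparison $u\ge\ul{u}$. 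Your sketch names the right obstacle but does not yet supply the mechanism that overcomes it.
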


\section{Implicitly defined fully nonlinear elliptic pde.}

The work of CNS on Monge-Ampere equations has a natural and  important extension to implicitly defined fully nonlinear pde.
Let $A=(a_{ij})$ be a symmetric nxn matrix (or more generally a natural tensor on a Riemannian manifold) and define
$$F(A)=f(\lambda_1,\ldots,\lambda_n),$$
where the $\lambda_i$ are the eigenvalues of $A$ and $f(\lambda)$ is a symmetric function (say smooth for simplicity). Then $F(A)$ will also be smooth.
When $A=(u_{ij}(x)),\,x\in \Omega$, and $f(\lambda)=\Pi \lambda_i,\, F(A)=\det{u_{ij}(x)}$, and we recover the Monge-Ampere operator which is elliptic when $\lambda$ lies in the positive cone $\Gamma^+_n=\{\lambda \in R^n: \lambda_i>0\}$.
\vspace{.1in}
What happens in general?

\subsection{Ellipticity and concavity.} 
Let $f$ is defined in a symmetric open convex cone $\Gamma$ in $\bfR^n$ with vertex at the origin with $\Gamma^+_n \subset \Gamma$. 
\begin{proposition} Assume that $f_i:=f_{\lambda_i}>0 \, \forall i$ and that $f$ is concave. Then $F^{ij}:=\frac{\partial F}{\partial a_{ij}}=f_i \delta_{ij}$ when $A$ is diagonal so the linearized operator $L=F^{ij}\nabla_i \nabla_j$ is elliptic. Moreover 
$F$ is a concave function of $A$.
\end{proposition}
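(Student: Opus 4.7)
The plan is to compute $F^{ij}$ at a diagonal matrix using first-order eigenvalue perturbation theory, then extend ellipticity to a general $A$ via the orthogonal invariance $F(OXO^T) = F(X)$, and finally establish concavity from the second variation of $F$ along linear curves in the space of symmetric matrices.

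For the linearization, fix $A$ diagonal with pairwise distinct eigenvalues $\lambda_1, \dots, \lambda_n$ and let $A(t) = A + tB$ for a symmetric $B$. Analytic perturbation theory yields smooth eigenvalues $\lambda_i(t) = \lambda_i + t b_{ii} + O(t^2)$, so
$$\left.\frac{d}{dt}\right|_{t=0} F(A+tB) = \sum_i f_i b_{ii},$$
forcing $F^{ij}(A) = f_i \delta_{ij}$. By continuity of the first derivatives of $F$ in $A$, this persists when eigenvalues coalesce. For a general symmetric $A = O^T \Lambda O$, the chain rule and orthogonal invariance give $(F^{ij}(A)) = O^T \operatorname{diag}(f_1, \dots, f_n) O$, which is positive definite since every $f_i > 0$; hence $L = F^{ij} \nabla_i \nabla_j$ is elliptic.

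For concavity, by orthogonal invariance and density it suffices to check that $t \mapsto F(A+tB)$ is concave at $t=0$ when $A$ is diagonal with distinct eigenvalues. Second-order perturbation theory gives $\lambda_i''(0) = 2\sum_{j \neq i} b_{ij}^2/(\lambda_i - \lambda_j)$, from which a direct computation yields
$$\left.\frac{d^2}{dt^2}\right|_{t=0} F(A+tB) = \sum_{i,j} f_{ij} b_{ii} b_{jj} + 2\sum_{i<j} \frac{f_i - f_j}{\lambda_i - \lambda_j} b_{ij}^2.$$
The first sum is $\le 0$ by the concavity of $f$. For the second, the symmetry of $f$ enters decisively: set
$$\phi(s) = f\bigl(\dots, (1-s)\lambda_i + s\lambda_j, \dots, s\lambda_i + (1-s)\lambda_j, \dots\bigr),$$
in which only the $i$th and $j$th slots are varied. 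Swapping those slots shows $\phi(1-s) = \phi(s)$, so $\phi$ is symmetric about $s = 1/2$; combined with the concavity of $\phi$ (inherited from $f$), this forces $\phi'(0) \ge 0$, and
$$\phi'(0) = (\lambda_j - \lambda_i)\bigl(f_i(\lambda) - f_j(\lambda)\bigr)$$
translates into $(f_i - f_j)(\lambda_i - \lambda_j) \le 0$. Therefore every off-diagonal term is $\le 0$, and the Hessian of $F$ is negative semidefinite, proving concavity.

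The main obstacle is the off-diagonal sum in the Hessian formula: the pointwise concavity of $f$ controls only the diagonal perturbations, while the ``rotational'' perturbations encoded by $b_{ij}$ with $i \neq j$ force the use of the symmetry of $f$ through the auxiliary function $\phi$. Without symmetry, $F$ would generally fail to be concave, so this is where the full hypothesis is used. Coalescence of eigenvalues is handled by approximation once the generic case is settled.
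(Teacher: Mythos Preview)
The paper does not supply a proof of this proposition; it is an expository tribute and the proposition is simply stated in Section~5.1 as background for the discussion of implicitly defined fully nonlinear elliptic PDE. So there is nothing in the paper to compare your argument against.

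That said, your proof is correct and is essentially the standard one (it is, up to cosmetic differences, the argument one finds in the original paper \cite{CNS3} of Caffarelli--Nirenberg--Spruck). The first-order perturbation computation giving $F^{ij}=f_i\delta_{ij}$ at a diagonal $A$ is right, as is the extension to general $A$ by orthogonal invariance. For concavity, your second-variation formula is correct, and the key inequality $(f_i-f_j)(\lambda_i-\lambda_j)\le 0$ is exactly what is needed. Your derivation of it via the auxiliary function $\phi$ is fine; in fact you can shorten it slightly: you only need $\phi(0)=\phi(1)$ (which follows from the symmetry of $f$) together with the concavity of $\phi$, since then $\phi(1)\le \phi(0)+\phi'(0)$ gives $\phi'(0)\ge 0$ directly, without invoking the full reflection symmetry about $s=1/2$. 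Your handling of coalescing eigenvalues by density is legitimate because concavity is preserved under pointwise limits.
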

The concavity  condition is very important for the regularity theory of fully nonlinear elliptic pde.

\subsection{G\r{a}rding's theory of hyperbolic polynomials.} 

Louis was an incredibly thorough mathematician (i.e he didn't miss much) with a vast knowledge of pde literature and knew G\r{a}rding's work on hyperbolic polynomials. As the name suggests, G\r{a}rding's beautiful theory was related to hyperbolic pde
but ultimately had many important algebraic consequences and is important in convex analysis (see the article of Reese Harvey and Blaine Lawson \cite{HL}).
\vspace{.1in}
\begin{center}
\includegraphics[width=2.5cm, height=3cm]{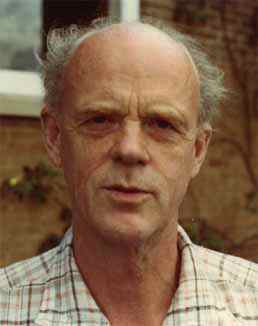}
\end{center}g

\begin{definition} 
A homogeneous polynomial $p(\lambda)$ of degree m in $R^n$ is called hyperbolic with respect to a direction $a\in R^n$ (notation $\text{hyp}\, a$) if for all $x\in R^n$, the polynomial $p(x+ t a)$ has exactly $k$ real roots. Thus
$$p(x+ta)=p(a)\Pi_{k=1}^m (t+\lambda_k(x)).$$
We may assume $p(a)>0$. It is easily checked that 
\vspace{-.1in}
$$q(\lambda):=\sum a_j \frac{\partial}{\partial \lambda_j}p(\lambda)$$

 is also $\text{hyp} \,a$.
 \end{definition}

\begin{example}  Since $\sigma_n(\lambda)=\Pi \lambda_i$ is $\text{hyp}\,a$ for $a=(1,\ldots,1)$, so are the elementary symmetric functions $\sigma_k(\lambda)$.
\end{example}
\vspace{.05in}

\vspace{.1in}
\subsection{The G\r{a}rding cone.} 
Let $\Gamma=\Gamma(P,a)$ denote the component in $R^n$ containing $a$ of the set where $p>0$. G\r{a}rding proved \cite{Garding} that $\Gamma$ is a convex cone with vertex at the origin and that $p$ is $\text{hyp}\,b$ for all $b\in \Gamma$.  Moreover
$\Gamma(p, a)\subset \Gamma(q,a)$.

 In particular the G\r{a}rding cones for the elementary symmetric functions $\sigma_k(\lambda)$ are nested starting from the positive cone for $\sigma_n(\lambda)$ and ending with the half-space $\sum \lambda_i>0$ for $\sigma_1(\lambda)$. Note also that for $k>1,\, \sigma_k=0$ on the positive $\lambda_i$ axes.

The main result proved by G\r{a}rding \cite{Garding} is an inequality which is equivalent to the statement that $p^{\frac1m}(\lambda)$ is concave in $\Gamma$. It follows easily that  $p_{\lambda_i}(\lambda)>0$  in $\Gamma$ for all $i=1,\ldots, n$.

\begin{remark} 

There are interesting and important examples of elliptic and concave $f(\lambda)$ that do not arise from hyperbolic polynomials
but are related to them. For example, 
the homogeneous degree 1 quotients
$$f(\lambda):= \left(\frac{\sigma_k(\lambda)}{\sigma_l(\lambda)}\right)^{\frac1{k-l}},\, k>l $$
 is elliptic and concave in the G\r{a}rding cone $\Gamma_k$ for $\sigma_k(\lambda)$.
\end{remark}

\subsection{The simplest version of the Dirichlet problem.} Let $u$ satisfy 
\begin{eqnarray*}
&F(D^2 u(x))\equiv f(\lambda(D^2 u(x)))=\psi(x)\,\,\text{ in $\Omega$},\\
&u=\varphi\,\,\text{on $\partial \Omega$},
\end{eqnarray*}
where $f(\lambda)$ is elliptic and concave in a symmetric open convex cone $\Gamma$  (with vertex at origin),
 containing the positive cone $\Gamma_n$ and $\limsup_{\lambda \goto \partial \Gamma}f(\lambda)\leq \inf_{\ol{\Omega}} \psi$.

 Our paper \cite{CNS3} assumes also the additional structural condition:\\
 for all $C>0$ and $K$ compact, there exist $R=R(C,K)$ such that
 $$f(\lambda_1, \ldots,\lambda_{n-1}, \lambda_n+R)\geq C,\,\, f(R\lambda)\geq C \,\,\text{for all $\lambda \in K$}.$$
 
\begin{theorem}  If there exists $R$ large so that $(\kappa_1,\ldots, \kappa_{n-1}, R) \in \Gamma$
at each $x\in \partial \Omega$, there exists a unique admissible solution $u\in C^{\infty}(\ol{\Omega})$.
Here $\kappa_1, \ldots,\kappa_{n-1}$ are the principal curvatures of $\partial \Omega$.\\
\end{theorem}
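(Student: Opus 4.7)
I would attack this via the method of continuity, embedding the problem in a family $F(D^2 u_t) = \psi_t$, $u_t = \varphi_t$ on $\partial\Omega$, that connects it to a trivial problem whose admissible solution is known (for instance, deforming $\psi$ down to a constant below $\inf \psi$ and $\varphi$ to a strictly convex quadratic that lies in $\Gamma$). Openness at any admissible $u_t$ follows from the implicit function theorem, since the linearization $L = F^{ij}\nabla_i\nabla_j$ is elliptic by the Proposition (ellipticity of $f$ plus concavity), so $L: C^{2,\alpha}_0 \to C^{\alpha}$ is invertible. Closedness reduces to an a priori $C^{2,\alpha}(\overline\Omega)$ bound for admissible solutions, after which Evans--Krylov (available because $F$ is concave as a function of $A$) together with standard Schauder bootstrapping yields $C^\infty(\overline\Omega)$ and hence closure of the continuity path.

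The a priori estimates proceed in the usual order. For $C^0$, I would use the comparison principle against an admissible subsolution built from the structural condition: because $\Gamma_n^+ \subset \Gamma$ and $f(R\lambda) \geq C$ for $\lambda$ in any compact $K \subset \Gamma$, one can produce $\underline u = \varphi_\ast + A(|x|^2 - \mathrm{const})$ with $A$ large, whose Hessian eigenvalues lie in $\Gamma$ and for which $F(D^2\underline u) \geq \sup\psi$; coupled with the linear supersolution coming from $\limsup_{\lambda \to \partial \Gamma} f \leq \inf\psi$, this pins $u$ between two controlled barriers. For $C^1$, interior gradient bounds come from the maximum principle applied to $|\nabla u|^2 e^{\eta u}$ (the classical Bernstein technique, which only uses ellipticity and concavity), and boundary gradient bounds come from sliding the subsolution/supersolution against $u$ along $\partial\Omega$.

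The $C^2$ estimate is the heart and the main obstacle. Tangential second derivatives $u_{\alpha\beta}$ ($\alpha,\beta < n$) on $\partial\Omega$ are determined, up to lower order, by differentiating the boundary condition $u = \varphi$ twice along $\partial\Omega$. Mixed tangential-normal derivatives $u_{\alpha n}$ on $\partial\Omega$ are controlled by an auxiliary barrier of the form $\pm T(u - \varphi) + B\,d - N|x-x_0|^2$ where $T$ is a tangential vector field and $d$ is the distance to $\partial\Omega$; applying the linearized operator and using concavity plus the $C^1$ estimate shows this barrier is a supersolution, and its vanishing on $\partial\Omega$ gives $|u_{\alpha n}| \leq C$ at $x_0$. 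The genuinely hard case is the double normal derivative $u_{nn}$ on $\partial\Omega$: using the formula $\kappa_\alpha = \varphi_{\alpha\alpha}/\text{(metric)}$ for boundary data and the structural hypothesis that $(\kappa_1,\ldots,\kappa_{n-1}, R)\in \Gamma$ for some large $R$, one shows that if $u_{nn}(x_0)$ were too large the eigenvalue vector of $D^2u(x_0)$ would stay inside $\Gamma$ and force a contradiction with $f(\lambda(D^2u)) = \psi$ being bounded; concretely one minimizes $f(\kappa_1,\ldots,\kappa_{n-1}, M)$ over $\partial\Omega$ and uses concavity of $f$ together with the supporting hyperplane to $\{f \geq \inf \psi\}$. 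This is precisely where the admissibility hypothesis on $\partial\Omega$ enters, and where I expect to spend the most effort.

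Once boundary $C^2$ bounds are in hand, the global $C^2$ estimate reduces to an interior estimate for $\sup_\Omega \lambda_{\max}(D^2 u)$, obtained by applying $L$ to the test function $W = \lambda_{\max} + \frac{1}{2}|\nabla u|^2$ (or its perturbation to handle multiplicity of eigenvalues), using concavity of $f$ to absorb the bad terms and reducing the interior maximum to a boundary maximum. With $\|u\|_{C^2(\overline\Omega)}$ bounded, the Evans--Krylov theorem gives a $C^{2,\alpha}(\overline\Omega)$ bound, closing the continuity method and delivering the smooth admissible solution. Uniqueness when $\psi_u \geq 0$ follows from the comparison principle applied to the concave elliptic operator $F$.
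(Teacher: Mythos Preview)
The paper itself contains no proof of this theorem: it is a survey/tribute that merely states the result, attributes it to \cite{CNS3}, and remarks that it ``was hard to prove but is far from optimal.'' So there is nothing in the present paper to compare your proposal against.

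That said, your outline is essentially the strategy of \cite{CNS3}: continuity method, openness from ellipticity of $L=F^{ij}\nabla_i\nabla_j$, closedness from $C^{2,\alpha}$ a~priori estimates built up in the order $C^0\to C^1\to C^2\to C^{2,\alpha}$ (the last via Evans--Krylov, using concavity of $F$). A few points where your sketch is looser than the actual argument: (i)~for the interior gradient bound the Bernstein function is unnecessary here, since differentiating $F(D^2u)=\psi(x)$ once gives $F^{ij}(u_k)_{ij}=\psi_k$, a linear equation for $u_k$, so the maximum of $|\nabla u|$ is attained on $\partial\Omega$; (ii)~in the double normal estimate you conflate the principal curvatures $\kappa_\alpha$ of $\partial\Omega$ with the tangential second derivatives of $u$ --- the correct relation is $u_{\alpha\beta}=D_{\alpha\beta}\varphi - u_\nu\, B_{\alpha\beta}$ on $\partial\Omega$, with $B$ the second fundamental form, so the tangential eigenvalues of $D^2u$ involve $\varphi$ and the already-controlled $u_\nu$ in addition to the $\kappa_\alpha$, and one must also absorb the bounded off-diagonal entries $u_{\alpha n}$ before invoking the structural condition $f(\lambda_1,\ldots,\lambda_{n-1},\lambda_n+R)\geq C$; (iii)~since $\psi=\psi(x)$ in this theorem, uniqueness follows directly from the comparison principle for concave elliptic operators and no hypothesis ``$\psi_u\geq 0$'' is needed. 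These are refinements rather than gaps; the architecture you describe is the one actually used.
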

Our theorem was hard to prove but is far from optimal. It also excludes the nice example 
$$f(\lambda):= \left(\frac{\sigma_k(\lambda)}{\sigma_l(\lambda)}\right)^{\frac1{k-l}},\, k>l $$.

Over the years there has been significant improvements. For example in 2014, Bo Guan \cite{Guan2} proved the essentially optimal result:

\begin{theorem} Assume only ellipticity and concavity (i.e no additional structure conditions). If there exists an admissible subsolution $\ul{u}\in C^2(\ol{\Omega}),\,\ul{u}=\varphi \,\,\text{on $\partial \Omega$}$, then
there exists a unique admissible solution $u\in C^{\infty}(\ol{\Omega})$. If in addition $f(\lambda)$ satisfies
$$\sum \lambda_i f_{\lambda_i}\geq 0 \,\, \text{ on $\Gamma\cap\{\inf \psi \leq f \leq \sup \psi\}$},$$
we may take $\Omega$ to be a Riemannian manifold with smooth boundary.
\end{theorem}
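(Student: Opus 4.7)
The plan is to establish existence by the continuity method, deforming from the subsolution. Consider the family
\begin{equation*}
F(D^2 u^t) = t\,\psi(x) + (1-t)\,F(D^2 \lu) \quad \text{in } \Omega, \qquad u^t = \varphi \quad \text{on } \partial\Omega,
\end{equation*}
so that $u^0 = \lu$ solves the problem at $t=0$. Let $T \subset [0,1]$ denote those $t$ for which an admissible solution $u^t \in C^{2,\alpha}(\ol\Omega)$ exists. Openness is standard: at an admissible $u^t$, concavity and ellipticity of $f$ give a uniformly elliptic, concave linearization $L = F^{ij}\nabla_i \nabla_j$ with smooth coefficients, and the implicit function theorem in H\"older spaces yields a neighborhood of solvable parameters. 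Closedness reduces to a priori $C^{2,\alpha}(\ol\Omega)$ estimates for admissible solutions depending only on the data and $\lu$. Uniqueness (when $\psi_u \ge 0$) is immediate from the comparison principle applied to the concave operator $F$.

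For the $C^0$ estimate, $\lu$ is itself a lower barrier via comparison, giving $u \geq \lu$; an upper bound follows by comparing $u$ with the harmonic extension of $\varphi$, using that any admissible $\lambda \in \Gamma$ satisfies $\sum \lambda_i \geq n f^{1/n} \geq c$ in appropriate normalization (more generally one uses concavity and $f(\lambda) = \psi \geq \inf \psi > 0$). For the $C^1$ estimate, the tangential derivatives along $\partial\Omega$ are controlled by $|\nabla^T\varphi|$, the inequality $u \geq \lu$ with equality on $\partial\Omega$ yields $u_\nu \leq \lu_\nu$, and an upper barrier of the form $\lu + A d - B d^2$, with $d$ the distance to $\partial\Omega$, produces a matching lower bound via the linearized maximum principle.

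The principal obstacle is the $C^2$ estimate at the boundary. Interior second derivative bounds follow from a global maximum principle argument applied to an auxiliary function such as $\max_{|\xi|=1} u_{\xi\xi} + \frac{\gamma}{2}|\nabla u|^2 - N(u - \lu)$; differentiating the equation twice and using concavity of $F$ in $A$ together with the subsolution inequality $F(D^2\lu) \geq F(D^2 u)$ absorb the bad terms. At the boundary, pure tangential second derivatives are determined by $\varphi$ on $\partial\Omega$. The tangential--normal derivatives $u_{\tau\nu}|_{\partial\Omega}$ are estimated by constructing a barrier $\Phi = A(\lu - u) + B d - C d^2 - |(u - \lu)_\tau|^2/\eta$ and applying $L\Phi$ in a boundary collar; the strict inequality $F(D^2\lu) > F(D^2 u)$ (or the identity when $t=1$ plus a perturbation) provides the slack needed to dominate all error terms without invoking any structural condition on $f$ at $\partial\Gamma$. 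The double normal derivative $u_{\nu\nu}|_{\partial\Omega}$ is the most delicate point, and is where Guan replaces the CNS device of solving directly from $A^{nn}u_{nn} = O(1)$ (which required strict convexity or the asymptotic structure condition) by an argument based on concavity: if $u_{\nu\nu}(x_0)$ were unbounded along a sequence, then by concavity of $f$ evaluated between $\lambda(D^2 u(x_0))$ and $\lambda(D^2\lu(x_0))$, together with the already-established bounds on the tangential block of $D^2 u$, one derives a contradiction with $f(\lambda(D^2 u)) = \psi$. I expect this to be the main technical hurdle, as it requires a careful perturbation within the G\r{a}rding cone to exploit concavity quantitatively.

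Once the $C^2$ estimate is in hand, $F$ becomes uniformly elliptic on the compact admissible range, the Evans--Krylov theorem yields a global $C^{2,\alpha}(\ol\Omega)$ bound, and Schauder bootstrapping gives $C^\infty(\ol\Omega)$, closing the continuity argument. For the Riemannian extension, commuting covariant derivatives in the interior and boundary $C^2$ arguments introduces curvature terms of the schematic form $F^{ij}R_{ikjl}\nabla^k\nabla^l u$ and $F^{ij}R_{ikjl}u^{kl}$; the hypothesis $\sum \lambda_i f_{\lambda_i} \ge 0$ on the relevant sublevel set ensures that the critical contraction $\sum \lambda_i F^{ii} = \sum \lambda_i f_{\lambda_i}$ has a controlled sign, allowing exactly these curvature terms to be absorbed so that the flat-space maximum principle proofs go through verbatim.
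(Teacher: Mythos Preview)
The paper does not prove this theorem. It is stated as a result of Bo Guan \cite{Guan2} and is presented purely as an illustration of how the CNS framework has been sharpened in the subsequent literature; no argument, sketch, or indication of proof is given in the paper itself. There is therefore nothing in the paper against which to compare your proposal.

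As an independent remark on your sketch: the overall architecture (continuity method, reduction to a priori $C^{2,\alpha}$ estimates, Evans--Krylov plus Schauder) is the standard one, and you correctly single out the boundary double-normal estimate $u_{\nu\nu}|_{\partial\Omega}$ as the place where the subsolution replaces the CNS structural conditions. However, the description of that step is vague at exactly the point where the content lies: ``by concavity \ldots one derives a contradiction'' is not an argument, and in fact Guan's proof of the mixed and double-normal boundary estimates proceeds through a rather delicate barrier construction and a quantitative use of the subsolution (not merely a qualitative concavity comparison between $\lambda(D^2 u)$ and $\lambda(D^2\lu)$). Your interior $C^2$ test function also needs the subsolution term with the correct sign and a careful treatment of $\sum f_i$ to work without structure conditions. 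So while the outline is in the right spirit, the proposal does not yet contain the key ideas that make the theorem go through; those would have to be supplied from \cite{Guan2}.
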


\subsection{Final thoughts.}

The setting of implicitly defined fully nonlinear elliptic initiated in the paper \cite{CNS3} unleashed a tidal wave of research in fully nonlinear pde and geometric analysis (curvature flows, conformal geometry, complex geometry, \ldots) which is still very much ongoing.
Louis Nirenberg's scholarship, insight and experience played a large role in this development and it is certainly one of his most enduring legacies.

 Louis was also a  superb Ph.d advisor who produced 46 students\footnote{This is according to the math genealogy project tabulation.} and mentored  numerous young mathematicians from all over the world and this is also a great part of his  legacy. We can all learn from him about the benefits of generosity, openness and collaboration.\\

\bigskip

\end{document}